\documentclass[12pt]{amsart}
\usepackage[colorlinks=true,pagebackref,hyperindex,citecolor=green,linkcolor=red]{hyperref}
\usepackage{amsmath}
\usepackage{amsfonts}
\usepackage{amssymb}
\usepackage{setspace}
\usepackage{moreverb}
\usepackage[latin1]{inputenc}
\usepackage[T1]{fontenc}
\usepackage{amssymb}
\usepackage{tikz}
\usepackage{color}
\usepackage[all]{xy}
\usepackage{xfrac}
\usepackage[top=1in, bottom=1in, left=1in, right=1in]{geometry}
\usepackage{mathrsfs}

\usetikzlibrary{shapes}

\newtheorem{theoremx}{Theorem}

\newtheorem{theorem}{Theorem}[section]

\newtheorem{corollary}[theorem]{Corollary}
\newtheorem{lemma}[theorem]{Lemma}
\newtheorem{proposition}[theorem]{Proposition}

\theoremstyle{definition}
\newtheorem{definition}[theorem]{Definition}

\newtheorem{notation}[theorem]{Notation}

\newtheorem{example}[theorem]{Example}

\newtheorem{conjecture}[theorem]{Conjecture}

\newtheorem{remark}[theorem]{Remark}
\numberwithin{equation}{subsection}




\newcommand{\NN}{\mathbb{N}}

\newcommand{\ZZ}{\mathbb{Z}}

\newcommand{\RR}{\mathbb{R}}

\newcommand{\cI}{\mathcal{I}}

\newcommand{\m}{\mathfrak{m}}

\newcommand{\Vol}{\operatorname{Vol}}
\newcommand{\V}{\operatorname{V}}

\newcommand{\ceil}[1]{\lceil {#1} \rceil}



\newcommand{\e}{\text{e}}
\newcommand{\ehk}{\e_{HK}}



\begin{document}
\newcommand{\tens}{\otimes}
\newcommand{\hhtest}[1]{\tau ( #1 )}
\renewcommand{\hom}[3]{\operatorname{Hom}_{#1} ( #2, #3 )}

\allowdisplaybreaks

\title{$F$-Volumes}

\author[W. Badilla-C\'espedes]{W\'agner Badilla-C\'espedes{$^1$}}
\address{Centro de Investigaci\'on en Matem\'aticas, Guanajuato, Gto., M\'exico}
\email{wagner.badilla@cimat.mx }

\author[L. N\'u\~nez-Betancourt]{Luis N\'u\~nez-Betancourt{$^2$}}
\address{Centro de Investigaci\'on en Matem\'aticas, Guanajuato, Gto., M\'exico}
\email{luisnub@cimat.mx}

\author[S. Rodr\'iguez-Villalobos]{Sandra Rodr\'iguez-Villalobos{$^3$}}
\address{Department of Mathematics, University of Utah, 155 South 1400 East, 
Salt Lake City, UT 84112, USA}
\email{rodriguez@math.utah.edu}

\thanks{{$^1$}Partially supported by CONACYT Grants 295631 $\&$ 284598, and  N\'u\~nez-Betancourt's C\'atedra Marcos Moshinsky}
\thanks{{$^2$}Partially supported by CONACYT Grant 284598 and C\'atedras Marcos Moshinsky.}
\thanks{{$^3$}Partially supported by the Mexican Academy of Sciences (AMC) and  CONACYT Grant 284598.}

\subjclass[2010]{Primary 	 13D40, 13A35, 14B05.}

\maketitle

\begin{abstract}
In this work we  define a numerical invariant called $F$-volume. This number extends the definition of $F$-threshold of a pair of ideals $I$ and $J$, $c^J(I)$ to a sequence of ideals $J$, $I_1, \ldots, I_t$. We obtain several properties that emulate those of the $F$-threshold. In particular, the $F$-volume detects $F$-pure complete intersections. In addition, we relate this invariant to the Hilbert-Kunz multiplicity.
\end{abstract}

\tableofcontents

\section{Introduction}
Throughout this manuscript  $R$ denotes  a Noetherian ring of prime characteristic $p$.
Given two ideals $I,J\subseteq R$ such that $I\subseteq \sqrt{J}$, the $F$-threshold of $I$ with respect to $J$ \cite{MTW,HMTW,DSNBP}
is defined by
$$
c^J(I)=\lim\limits_{e\to\infty} \frac{\nu^J_I(p^e)}{p^e},
$$
where  $\nu^J_I(p^e)=\max\{ t \in \NN | I^t \not\subseteq J^{[p^e]}\}$.
If  $R$ is a regular ring, then the set of $F$-thresholds of $I$ is the same as the set of $F$-jumping numbers of the test ideals of $I$ \cite{BMS-MMJ}. 
If the ring is not regular, these two sets of invariants differ (e.g. \cite{MOY}).
However, these numbers still give information on $I,J$ and $R$. For instance, one can use $F$-thresholds to study integral closure, tight closure,  Hilbert-Samuel multiplicities \cite{HMTW},
 and $a$-invariants  \cite{DSNB,DSNBP}.

Motivated by the mixed test ideals associated to a sequence of ideals $\underline{I}=I_1,\ldots,I_t$ and their constancy regions, we define an analogue of the $F$-threshold for a sequence of ideals. In our main result, we  describe this invariant as a limit of a convergent sequence.

\begin{theoremx}[{see Theorem \ref{ThmLimitExistsSec} and Definition \ref{def1}}]\label{MainThmExistence}
Let $\underline{I}=I_1,\ldots, I_t\subseteq R$ be a sequence of ideals, and $J\subseteq R$ be an ideal such that $I_1,\ldots, I_t\subseteq \sqrt{J}$. 
Let
\begin{align*}
\V_{\underline{I}}^J(p^e)=\{(a_1,\ldots, a_t)\in\NN^t\; | \; 
	I_1^{a_1}\cdots I_t^{a_t}\not \subseteq J^{[p^e]}\}.
\end{align*}
Then, the limit
$$
\Vol^J_F( \underline{I})=\lim\limits_{e\to\infty} \frac{|\V_{\underline{I}}^J(p^e)|}{p^{et}}
$$
converges, and it is called the $F$-volume of $\underline{I}$ with respect to $J$.
\end{theoremx}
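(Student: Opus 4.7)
The plan is to prove convergence of $\beta_e := |\V_{\underline{I}}^J(p^e)|/p^{te}$ by interpreting it as a Riemann sum, reducing the analysis along rational rays to the classical 1-variable $F$-threshold, and applying the Dominated Convergence Theorem.

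First I would verify uniform boundedness of $\beta_e$. Since $I_i \subseteq \sqrt{J}$, there exists $N$ with $I_i^N \subseteq J$ for all $i$; letting $r$ be the minimal number of generators of $J$, the pigeonhole inclusion $J^{rp^e} \subseteq J^{[p^e]}$ gives $I_i^{rNp^e} \subseteq J^{[p^e]}$, so $\V_{\underline{I}}^J(p^e) \subseteq [0, rNp^e)^t$ and $\beta_e \leq (rN)^t$. I would also record that $\V_{\underline{I}}^J(p^e)$ is downward-closed in $\NN^t$, since $I_i^{a_i'} \supseteq I_i^{a_i}$ whenever $a_i' \leq a_i$.

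Second I would reduce convergence along rational rays to the 1-variable case. For a primitive $\underline{\lambda} = (\lambda_1,\ldots,\lambda_t) \in \NN^t$, set $K_{\underline{\lambda}} := I_1^{\lambda_1}\cdots I_t^{\lambda_t}$; using generators one checks $K_{\underline{\lambda}}^s = \prod_i I_i^{s\lambda_i}$ for all $s \in \NN$. Consequently $(s\lambda_1,\ldots,s\lambda_t) \in \V_{\underline{I}}^J(p^e)$ if and only if $s \leq \nu^J_{K_{\underline{\lambda}}}(p^e)$, and since $K_{\underline{\lambda}} \subseteq \sqrt{J}$, the classical 1-variable $F$-threshold result yields $\nu^J_{K_{\underline{\lambda}}}(p^e)/p^e \to c^J(K_{\underline{\lambda}})$ as $e \to \infty$.

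Finally, define the step function $\chi_e : \RR^t_{\geq 0} \to \{0,1\}$ taking the value $1$ on the cube $\prod_i [a_i/p^e, (a_i+1)/p^e)$ precisely when $(a_1,\ldots,a_t) \in \V_{\underline{I}}^J(p^e)$, so that $\int \chi_e\, d\underline{x} = \beta_e$ and $\chi_e \leq \mathbf{1}_{[0,rN]^t}$. For points $\underline{y} = s\underline{\lambda}$ with $\underline{\lambda}$ primitive and $s \in \frac{1}{p^{e_0}}\NN$ for some $e_0$, the previous step shows $\chi_e(\underline{y})$ is eventually constant whenever $s \neq c^J(K_{\underline{\lambda}})$. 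Using downward-closedness of $\V_{\underline{I}}^J(p^e)$ to sandwich an arbitrary $\underline{x} \in \RR^t_{\geq 0}$ between such rational points slightly above and slightly below, I would deduce $\chi_e \to \chi_{\V_\infty}$ almost everywhere, where $\V_\infty$ is a bounded downward-closed subset of $\RR^t_{\geq 0}$ whose topological boundary (essentially the graph of a monotone function of $t-1$ coordinates) has $t$-dimensional Lebesgue measure zero. The Dominated Convergence Theorem then gives $\beta_e \to \Vol(\V_\infty)$, which is the desired $F$-volume.

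The principal obstacle is the last step: rigorously establishing almost-everywhere convergence of $\chi_e$. The extension from $p$-adic rational points on rational rays to arbitrary points in $\RR^t_{\geq 0}$ via the sandwiching argument, combined with the measure-zero estimate on the boundary of $\V_\infty$, constitutes the bulk of the technical work.
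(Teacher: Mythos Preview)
Your approach is correct and genuinely different from the paper's. The paper argues by direct combinatorial comparison: using the identity $\mathfrak{a}^r=\mathfrak{a}^{r-sp^e}(\mathfrak{a}^{[p^e]})^s$ for $r\geq(\mu(\mathfrak{a})+s-1)p^e$, it shows that $\frac{1}{p^{e_1+e_2}}\V_{\underline{I}}^J(p^{e_1+e_2})$ is contained in a ``filled-in'' copy of $\frac{1}{p^{e_1}}\V_{\underline{I}}^J(p^{e_1})$ together with a strip near its combinatorial border of relative size $O(p^{-e_1})$, giving $\limsup_e\beta_e\leq\beta_{e_1}+O(p^{-e_1})$ and hence $\limsup\leq\liminf$. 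Your route instead reduces to the one-variable result of \cite{DSNBP} along rational rays and finishes with dominated convergence. The sandwiching step you flag as the main obstacle can indeed be carried out: the non-convergence locus of $\chi_e$ is trapped between a closed downward-closed set and a closed upward-closed set (the upper and lower monotone extensions of the pointwise limit on $\ZZ[1/p]_{\geq 0}^t$), and this intersection meets each line $\underline{x}_0+\RR\mathbf{1}$ in at most one point, since two distinct such points would sandwich a $p$-adic rational point forced to have both limit values; Fubini then gives measure zero. The paper's method is self-contained, yields explicit error terms, and extends verbatim to arbitrary $p$-families $J_\bullet$ (Theorem~\ref{ThmLimitExistsSec}); yours is modular, treating the one-variable case as a black box, and as a by-product identifies $\Vol_F^J(\underline{I})$ with the Lebesgue volume of an explicit limit set without any $F$-purity hypothesis (compare Proposition~\ref{PropositionMeasureB}, which requires $F$-purity).
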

  If $I_i$ is a  principal ideal generated by $f_i$, we 
write $\V_{\underline{f}}^J(p^e)$ and $\Vol^{J}_F (\underline{f})$ instead of $\V_{\underline{I}}^J(p^e)$ and 
$\Vol^{J}_F (\underline{I})$ respectively, where $\underline{f}$ is the sequence of element $f_1,\ldots, f_t$.

In the regular case,  this limit is the sum of the volumes of the constancy regions where $\tau(I^{a_1}_1 \cdots I^{a_t}_t)\not\subseteq J$   \cite[Remark 3.10]{BMS-MMJ} (see also \cite{FelipeConstReg}).

The proof of Theorem \ref{MainThmExistence} is based on a technical extension of the ideas of the case for a single ideal  \cite{DSNBP}.  However, the case of multiple ideals is not a simple consequence of this case. We devote Section \ref{SecExistence} to this proof. We also show a few properties of the $F$-volume that extend those of the $F$-thresholds.

If $R$ is an $F$-pure ring, the $F$-volume is the $\ell$-volume of a set in $\RR^\ell$ (see Propositions \ref{PropOtherLim} and 
\ref{PropositionMeasureB}). In Section \ref{SecFpure}, we present this and other results for $F$-pure rings. In particular, we show 
that $F$-volumes detect $F$-pure complete intersections.

\begin{theoremx}[{see Theorem \ref{ThmFpureCI}}]
Suppose that $(R,\m,K)$ is a local regular ring.
Let $I \subseteq \mathfrak{m}$ be an ideal in $R$, and $\underline{f}=f_1,\ldots, f_t$ be minimal generators of $I$. Then, $\Vol^{\mathfrak{m}}_F (\underline{f})=1$ if and only if $I$ is an $F$-pure complete intersection. 
\end{theoremx}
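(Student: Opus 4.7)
The forward implication follows directly from Fedder's criterion for complete intersections. If $I$ is an $F$-pure complete intersection, then $I^{[p^e]}:I=I^{[p^e]}+((f_1\cdots f_t)^{p^e-1})$, so $F$-purity of $R/I$ forces $(f_1\cdots f_t)^{p^e-1}\notin \m^{[p^e]}$ for every $e\geq 1$. For any $(a_1,\ldots,a_t)\in \{0,\ldots,p^e-1\}^t$, the product $f_1^{a_1}\cdots f_t^{a_t}$ divides $(f_1\cdots f_t)^{p^e-1}$ and hence also lies outside $\m^{[p^e]}$. Thus $\V_{\underline{f}}^{\m}(p^e)=\{0,\ldots,p^e-1\}^t$ for every $e$, which gives $\Vol^{\m}_F(\underline f)=1$.

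For the reverse implication, assume $\Vol^{\m}_F(\underline f)=1$. The first step is to show $(f_1\cdots f_t)^{p^e-1}\notin\m^{[p^e]}$ for every $e\geq 1$: if this failed for some $e_0$, raising to the $p^k$-th power would give $(f_1\cdots f_t)^{p^{e_0+k}-p^k}\in\m^{[p^{e_0+k}]}$, and since the complement $W_e:=\{0,\ldots,p^e-1\}^t\setminus \V_{\underline{f}}^{\m}(p^e)$ is upper-closed under the componentwise order, we would have $W_{e_0+k}\supseteq [p^{e_0+k}-p^k,\,p^{e_0+k}-1]^t$, giving $|W_{e_0+k}|/p^{(e_0+k)t}\geq p^{-e_0 t}$ and hence $\Vol^{\m}_F(\underline f)\leq 1-p^{-e_0 t}<1$, a contradiction. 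Next, since $(f_1\cdots f_t)^{p-1}\cdot f_i=f_1^{p-1}\cdots f_i^p\cdots f_t^{p-1}\in I^{[p]}$ for each $i$, the element $(f_1\cdots f_t)^{p-1}$ always lies in $I^{[p]}:I$; combined with its non-vanishing modulo $\m^{[p]}$, Fedder's criterion yields that $R/I$ is $F$-pure.

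It remains to show that $I$ is a complete intersection, which I view as the main obstacle. My plan is to induct on $t$, using the monotonicity $\Vol^{\m}_F(f_1,\ldots,f_t)\leq \Vol^{\m}_F(f_1,\ldots,f_{t-1})$, obtained from the projection $\V^{\m}_{\underline{f}}(p^e)\to \V^{\m}_{(f_1,\ldots,f_{t-1})}(p^e)$ forgetting the last coordinate, whose fibers have cardinality at most $p^e$. Since $\{f_1,\ldots,f_t\}$ is minimal, a short linear-algebra check confirms that $\{f_1,\ldots,f_{t-1}\}$ is a minimal generating set of $(f_1,\ldots,f_{t-1})$, so the inductive hypothesis applies to yield that $f_1,\ldots,f_{t-1}$ is a regular sequence and $R/(f_1,\ldots,f_{t-1})$ is $F$-pure. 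The delicate piece is then to prove that $f_t$ is a nonzerodivisor modulo $(f_1,\ldots,f_{t-1})$, using both the minimality of $\underline f$ and the containment $(f_1\cdots f_t)^{p^e-1}\notin\m^{[p^e]}$ established above. An alternative route is to pass to the associated graded ring $\gr_\m R$ and reduce to a monomial case via semicontinuity, since a minimally generated monomial ideal with $t>\operatorname{ht}(I)$ automatically satisfies $(f_1\cdots f_t)^{p-1}\in\m^{[p]}$ by a direct exponent count, contradicting the first step.
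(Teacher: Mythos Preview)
Your argument that $\Vol^{\m}_F(\underline f)=1$ forces $(f_1\cdots f_t)^{p^e-1}\notin\m^{[p^e]}$ for every $e$ is correct and clean, and Fedder's criterion then yields $F$-purity of $R/I$ exactly as you say; this step is in fact more explicit than in the paper, which deduces the complete intersection property first and leaves the $F$-purity check implicit. The direction ``$F$-pure complete intersection $\Rightarrow \Vol=1$'' is also correct and essentially matches the paper's proof.

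The genuine gap is the complete intersection step, and neither of your two sketches closes it. In the inductive route, knowing that $f_1,\ldots,f_{t-1}$ is a regular sequence with $F$-pure quotient together with $(f_1\cdots f_t)^{p^e-1}\notin\m^{[p^e]}$ does not readily force $f_t$ to be a nonzerodivisor modulo $(f_1,\ldots,f_{t-1})$; you flag this as ``delicate'' but give no argument, and there does not appear to be a short one. In the graded route, passing to $\gr_{\m}R$ does not in general produce a \emph{monomial} ideal, the initial forms of the $f_i$ need not remain minimal generators, and no semicontinuity statement for $\V_{\underline f}^{\m}$ under such a degeneration has been established. Your monomial observation---that $t>\operatorname{ht}(I)$ forces some variable to appear with total exponent $\geq 2$ in $m_1\cdots m_t$, whence $(m_1\cdots m_t)^{p-1}\in\m^{[p]}$---is correct, but the reduction to that case is not justified.

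The paper bypasses this obstacle entirely by going through the $F$-threshold. From $\Vol^{\m}_F(\underline f)=1$ one first shows $B^{\m}(\underline f)=[0,1)^t$ (Proposition~\ref{srftp}), and then the identity $c^{\m}(I)=\sup\{|\theta|:\theta\in B^{\m}(\underline f)\}$ of Proposition~\ref{propsup} gives $c^{\m}(I)=t$. In a regular local ring one has the known bound $c^{\m}(I)\leq\operatorname{ht}(I)$, so the chain
\[
t=\mu(I)\geq\operatorname{ht}(I)\geq c^{\m}(I)=t
\]
forces $\operatorname{ht}(I)=t$, and hence $\underline f$ is a regular sequence. Replacing your Step~3 by this $F$-threshold argument (and keeping your Step~1--2 for $F$-purity) gives a complete proof.
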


In Section \ref{SecHKVol}, we relate the $F$-volume and the Hilbert-Kunz multiplicity. Specifically, we obtain the following result.

\begin{theoremx}[{see Theorem \ref{ThmHK}}]\label{MainHK}
Suppose that $(R,\m,K)$ is a local ring. Let $\underline{f}=f_1,\ldots , f_t$ be part of a system of parameters for $R$, $I=(\underline{f})$, and $S=R/I$. Then, \[\e_{HK}(JS; S)\geq \frac{\e_{HK}(J;R)}{\Vol_F^J(\underline{f})}\]
for any $\m$-primary ideal $J$, such that $I\subseteq J$.
\end{theoremx}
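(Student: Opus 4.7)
The plan is to prove the per-level length inequality
\[
\ell\bigl(R/J^{[p^e]}\bigr) \le \bigl|\V^J_{\underline{f}}(p^e)\bigr| \cdot \ell\bigl(R/(J^{[p^e]} + I)\bigr)
\]
for every $e \ge 1$, then divide by $p^{e \dim R}$ and pass to the limit. Since $\underline{f}$ is part of a system of parameters one has $\dim S = \dim R - t$, and $(JS)^{[p^e]} = (J^{[p^e]} + I)/I$ gives $\ell(S/(JS)^{[p^e]}) = \ell(R/(J^{[p^e]} + I))$. Invoking Theorem~A on the middle factor then yields $\e_{HK}(J;R) \le \Vol_F^J(\underline{f}) \cdot \e_{HK}(JS;S)$, which rearranges to the claim when $\Vol_F^J(\underline{f}) > 0$; otherwise the inequality forces $\e_{HK}(J;R) = 0$ and the statement is interpreted accordingly.

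The length inequality I would establish via a monomial filtration. Enumerate $[0, p^e - 1]^t$ in decreasing lexicographic order as $\underline{a}_1 > \underline{a}_2 > \cdots > \underline{a}_N = \underline{0}$, write $f^{\underline{a}} := f_1^{a_1} \cdots f_t^{a_t}$, and set
\[
K_0 := J^{[p^e]}, \qquad K_k := J^{[p^e]} + (f^{\underline{a}_1}, \ldots, f^{\underline{a}_k}).
\]
Because $\underline{a}_N = \underline{0}$, one has $1 \in K_N$, so $K_N = R$ and $\ell(R/J^{[p^e]}) = \sum_{k=1}^{N} \ell(K_k/K_{k-1})$. Each successive quotient is cyclic, generated by the image of $f^{\underline{a}_k}$, and vanishes as soon as $f^{\underline{a}_k} \in J^{[p^e]}$; consequently at most $|\V^J_{\underline{f}}(p^e)|$ of these quotients are nonzero.

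The crux is the colon inclusion $J^{[p^e]} + I \subseteq (K_{k-1} : f^{\underline{a}_k})$, which presents every nonzero $K_k/K_{k-1}$ as a surjective image of $R/(J^{[p^e]} + I)$ and hence bounds its length by $\ell(R/(J^{[p^e]} + I))$. The containment of $J^{[p^e]}$ is immediate, so it remains to verify $f_i \cdot f^{\underline{a}_k} \in K_{k-1}$ for every $i$. If $a_{k,i} < p^e - 1$, then $\underline{a}_k + e_i$ is lex-larger than $\underline{a}_k$ and still lies in $[0, p^e-1]^t$, hence equals $\underline{a}_j$ for some $j < k$, so $f_i f^{\underline{a}_k} = f^{\underline{a}_j} \in K_{k-1}$. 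If $a_{k,i} = p^e - 1$, then $f_i f^{\underline{a}_k}$ is divisible by $f_i^{p^e}$, which lies in $J^{[p^e]}$ because $f_i \in I \subseteq J$ and Frobenius is a ring homomorphism in characteristic $p$.

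The delicate point is selecting an ordering on $[0, p^e - 1]^t$ that puts $\underline{0}$ last (forcing $K_N = R$) while ensuring each unit step $\underline{a}_k \mapsto \underline{a}_k + e_i$ moves backward (so the relations $f_i \in (K_{k-1} : f^{\underline{a}_k})$ propagate through the filtration). Decreasing lex accomplishes both, after which summing the length bounds, dividing by $p^{e \dim R}$, and invoking Theorem~A completes the argument.
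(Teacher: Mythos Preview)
Your proof is correct and follows essentially the same approach as the paper: both establish the per-level inequality $\lambda(R/J^{[p^e]})\le |\V^J_{\underline{f}}(p^e)|\cdot\lambda(R/(I+J^{[p^e]}))$ via a monomial filtration whose successive quotients are homomorphic images of $R/(I+J^{[p^e]})$, then divide by $p^{e\dim R}$ and pass to the limit. The only cosmetic difference is that the paper first filters $R/\cI_e$ (where $\cI_e=(f^a:a\notin\V^J_{\underline{f}}(p^e))\subseteq J^{[p^e]}$) by $|\V^J_{\underline{f}}(p^e)|$ steps with quotients surjected by $R/I$, and then tensors with $R/J^{[p^e]}$, whereas you filter $R/J^{[p^e]}$ directly using an explicit decreasing-lex enumeration of $[0,p^e-1]^t$; the underlying idea and the resulting bound are identical.
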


In Remark \ref{RemComp}, we relate Theorem \ref{MainHK} 
with a conjecture regarding the $F$-thresholds and the  Hilbert-Kunz multiplicity  \cite{NBS}.
In particular, we improve an estimate given in previous results \cite{NBS}.

\section{Existence and definition}\label{SecExistence}

In this section we prove a more general version of  Theorem \ref{MainThmExistence}. In order to do this, we start by introducing a couple of definitions.
\begin{definition}
A sequence $J_\bullet=\{J_{p^e}\}_{e \in \NN}$ of ideals in $R$ whose terms are indexed by the powers of the characteristic is called a $p$-family if $J_{p^e}^{[p]} \subseteq J_{p^{e+1}}$ for all  $e\in \NN$.
\end{definition}
An example of a $p$-family of ideals is the sequence  $J_\bullet=\{J^{[p^e]}\}_{e\in \NN}$ of Frobenius powers of an ideal $J$.

There are important $p$-families that relate to several limits in prime characteristic that measure singularities 
\cite{SmithVDB, HLMCM,YaoObs,TuckerFSig,DJpfamilies}.

\begin{definition} \label{def1}
Let $\underline{I}=I_1,\ldots, I_t\subseteq R$ be a sequence of ideals, and $J_\bullet=\{J_{p^e}\}_{e \in \NN}$ be a $p$-family of ideals in $R$ such that $I_1,\ldots, I_t\subseteq \sqrt{J_1}$. 
For each $e\in\NN$, we define
\begin{align*}
\V_{\underline{I}}^{J_\bullet}(p^e)=\{(a_1,\ldots, a_t)\in\NN^t\; | \; 
	I_1^{a_1}\cdots I_t^{a_t}\not \subseteq J_{p^e}\}.
\end{align*}
If $\underline{f}=f_1,\ldots,f_t$ is a sequence of elements of $R$ such that $f_1,\ldots, f_t\in  \sqrt{J_1}$, we use $\V_{\underline{f}}^{J_\bullet}(p^e)$ to denote $\V_{\underline{I}}^{J_\bullet}(p^e)$ where $\underline{I}=f_1R,\ldots, f_tR$. In case that the $p$-family is $J_\bullet=\{J^{[p^e]}\}_{e \in \NN}$ with $J$ an ideal in $R$, $\V_{\underline{I}}^{J_\bullet}(p^e)$ is denoted by $\V_{\underline{I}}^J(p^e)$. 
\end{definition}

\begin{remark}\label{finiteness}
Since $I_1,\ldots, I_t\subseteq \sqrt{J_1}$, for each $n\in\{1,\ldots, t\}$, there exists $\ell_n\in\NN$ such that $I_n^{\ell_n}\subseteq J_1$. Additionally, we have that $I_n^{\mu(I_n)p^e}\subseteq I_n^{[p^e]}$ and, as a consequence, $I_n^{\mu(I_n)\ell_np^e}\subseteq J_1^{[p^e]} \subseteq J_{p^e}$. Hence, if $I_1^{a_1}\cdots I_t^{a_t}\not \subseteq J_{p^e}$, then $a_n<\mu(I_n)\ell_np^e$ for all $n\in\{1,\ldots, t\}$. Thus, $|\V_{\underline{I}}^{J_\bullet}(p^e)|\leq p^{et}\prod_{n=1}^t\mu(I_n)\ell_n$ for every $e\in \NN$. Therefore, the sequence $\left\{\frac{|\V_{\underline{I}}^{J_\bullet}(p^e)|}{p^{et}}\right\}_{e\in\NN}$ is bounded.
\end{remark}

We now recall a well-known lemma to the experts (see for instance  \cite[Lemma $3.2$]{DSNBP}).

\begin{lemma}\label{DNBP1}
	Let  $\mathfrak{a}\subseteq R$ be an ideal. 
	Then, for every $r\geq(\mu(\mathfrak{a})+s-1)p^e$, we have that $\mathfrak{a}^r=\mathfrak{a}^{r-sp^e}(\mathfrak{a}^{[p^e]})^s$.
\end{lemma}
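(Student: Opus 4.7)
The plan is to verify the nontrivial inclusion $\mathfrak{a}^r \subseteq \mathfrak{a}^{r-sp^e}(\mathfrak{a}^{[p^e]})^s$; the reverse inclusion is immediate from $\mathfrak{a}^{[p^e]} \subseteq \mathfrak{a}^{p^e}$, which gives $(\mathfrak{a}^{[p^e]})^s \subseteq \mathfrak{a}^{sp^e}$. Fixing a minimal generating set $f_1, \ldots, f_\mu$ of $\mathfrak{a}$ with $\mu = \mu(\mathfrak{a})$, the ideal $\mathfrak{a}^r$ is generated by the monomials $f_1^{b_1} \cdots f_\mu^{b_\mu}$ with $b_1 + \cdots + b_\mu = r$, so it suffices to show each such monomial lies in $\mathfrak{a}^{r-sp^e}(\mathfrak{a}^{[p^e]})^s$.

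I would handle a single monomial by extracting $p^e$-th powers from the exponents. Writing $b_i = q_i p^e + s_i$ with $0 \le s_i < p^e$, the hypothesis $r \geq (\mu + s - 1)p^e$ combined with $\sum s_i \leq \mu(p^e - 1) < \mu p^e$ yields
\[
\sum_{i=1}^{\mu} q_i p^e \;=\; r - \sum_{i=1}^{\mu} s_i \;>\; (\mu + s - 1)p^e - \mu p^e \;=\; (s-1)p^e,
\]
so $q_1 + \cdots + q_\mu \geq s$. I would then pick integers $0 \leq c_i \leq q_i$ with $c_1 + \cdots + c_\mu = s$ (greedy assignment), and factor
\[
f_1^{b_1} \cdots f_\mu^{b_\mu} \;=\; \Bigl(\prod_{i=1}^{\mu} f_i^{b_i - c_i p^e}\Bigr) \cdot \Bigl(\prod_{i=1}^{\mu} f_i^{c_i p^e}\Bigr).
\]
The first factor is a monomial of total degree $r - sp^e$ in the $f_i$, hence lies in $\mathfrak{a}^{r-sp^e}$, while the second is a generator of $(\mathfrak{a}^{[p^e]})^s$.

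The only substantive step is the combinatorial estimate $\sum q_i \geq s$, and its role clarifies the origin of the constant $\mu(\mathfrak{a}) + s - 1$: one needs enough total degree to absorb the remainders $s_i < p^e$ across $\mu$ generators while still leaving $s$ full multiples of $p^e$ to distribute among the exponents. Everything else is direct bookkeeping on monomial exponents, so I do not anticipate any real obstacle.
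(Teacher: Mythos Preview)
Your argument is correct and is the standard proof of this fact. The paper itself does not prove the lemma; it simply records it as well known and cites \cite[Lemma~3.2]{DSNBP}, so there is nothing further to compare.
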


Towards proving Theorem \ref{MainThmExistence}, we need to introduce notation to describe different objects.
\begin{notation}
	Let $\underline{I}=I_1,\ldots, I_t\subseteq R$ be a sequence of ideals, and $a=(a_1,\ldots, a_t)\in\NN^t$. 
	We denote $(\ceil{a_1},\ldots,\ceil{a_t})$ by $\ceil{a}$.
	We write $\underline{I}^a$ to denote $I_1^{a_1}\cdots I_t^{a_t}$.
	Additionally, for each $x=(x_1,\ldots,x_t)\in \RR^t$, we write $\widehat{x}^n$ to denote $(x_1,\ldots,x_{n-1}, x_{n+1},\ldots, x_t)$.
	Let $e_1, e_2\in \NN$. Let $C$ be a subset of $\frac{1}{p^{e_1}}\NN^t$. 
	We denote the set 
	\[\bigcup_{x=(x_1,\ldots, x_t)\in C}\left\{y=(y_1,\ldots, y_t)\in \frac{1}{p^{e_1+e_2}}\NN^t: x_i-\frac{1}{p^{e_1}}<y_i\leq x_i\right\}\] 
	by $H_{e_1,e_2}(C)$. Finally, we use $\mathbf{1}$ to denote the element of $\NN^t$ whose coordinates are all $1$.
\end{notation}

\begin{definition}
	Let $e_1\in \NN$. Let $C$ be a subset of $\frac{1}{p^{e_1}}\NN^t$. 
	We say that $x\in C$ is a border point of $C$ if $x+\frac{1}{p^{e_1}}\mathbf{1}\not\in C$. 
	We denote by $\partial C$ the set of all border points in $C$. 
\end{definition}

\begin{notation} 
Given a subset $A$ of $\RR^t$ and $v\in \RR^t$, $A+v$ denotes the set $\{a+v \;| \; a \in A \}$.
Let $\underline{I}=I_1,\ldots, I_t\subseteq R$ be a sequence of ideals, $J_\bullet=\{J_{p^e}\}_{e \in \NN}$ be a $p$-family of ideals in $R$ such that $I_1,\ldots, I_t\subseteq \sqrt{J_1}$, and $\mu=\max\{\mu(I_1),\ldots,\mu(I_t)\}$. 
	Consider $e_1, e_2\in \NN$. 
	For each $n\in\{1,...,t\}$, let $\ell_n=\min\{\ell\mid I_n^{\ell}\subseteq J_1\}$.
	Then,
	\begin{itemize}
	\item$\mathcal{B}_n(\underline{I})_{e_1}=\frac{1}{p^{e_1}}\NN^{t-1}\cap\left(\prod_{i=1}^{n-1}[0,\mu(I_i)\ell_i]\times\prod_{i=n+1}^t[0,\mu(I_i)\ell_i]\right)$
	 \item$\mathcal{B}(\underline{I})_{e_1}=\frac{1}{p^{e_1}}\NN^t\cap\left(\bigcup_{j=1}^t\left(\prod_{i=1}^{j-1}[0,\mu(I_i)\ell_i]\times \{0\}\times\prod_{i=j+1}^t[0,\mu(I_i)\ell_i]\right)\right)$
	\item$\mathcal{R}_{e_1,e_2}=H_{e_1,e_2}(\frac{1}{p^{e_1}}\V_{\underline{I}}^{J_\bullet}(p^{e_1})),$ and
	\item $\mathcal{L}_{e_1,e_2}=
		H_{e_1,e_2}\left(  \bigcup^\mu_{j=0} \left(\partial \left(\frac{1}{p^{e_1}}\V_{\underline{I}}^{J_\bullet}(p^{e_1})\cup \mathcal{B}(\underline{I})_{e_1}\right)+\frac{j}{p^{e_1}}\mathbf{1}\right)\right).$
	\end{itemize}
\end{notation}

Roughly speaking, $\mathcal{R}_{e_1,e_2}$ is the result of filling the set $\frac{1}{p^{e_1}}\V_{\underline{I}}^{J_\bullet}(p^{e_1})$ when considered as a subset of $\frac{1}{p^{e_1+e_2}}\NN^t$.
Similarly we can think of $\mathcal{L}_{e_1,e_2}$ as the result of filling the subset of  $\frac{1}{p^{e_1+e_2}}\NN^t$ consisting of points in  $\frac{1}{p^{e_1}}\NN^t$ that are in the line segments joining $x\in\partial \left(\frac{1}{p^{e_1}}\V_{\underline{I}}^{J_\bullet}(p^{e_1})\cup \mathcal{B}(\underline{I})_{e_1}\right)$ with $x+\frac{1}{p^{e_1}}\mu\mathbf{1}$.

We now show an example that illustrates the regions  previously described.

\begin{example}
Suppose that $R=K[[x,y]]$ where $K$ is a field of characteristic $p=2$. Consider $\m=(x,y)$, the maximal ideal of $R$. Let $\underline{I}=xR,(y^2+x)R$.
	Then we have that
	\[\V_{\underline{I}}^{\m}(p^{e_1})=\left(\left([0,2^{e_1}-1]\times \left[0,\frac{2^{e_1}-2}{2}\right] \right)\cup \left(\left[0,\frac{2^{e_1}-2}{2}\right]\times \left( \frac{2^{e_1}-2}{2},2^{e_1}-1\right]\right)\right)\cap \mathbb{N}^2 .\]
	Note that $\mu=1$ and $\ell_1=\ell_2=1$.
	It follows that 
	\begin{align*}
	\partial \left(\frac{1}{p^{e_1}}\V_{\underline{I}}^{\m}(p^{e_1})\cup\mathcal{B}(\underline{I})_{e_1}\right)=&\frac{1}{2^{e_1}}\bigg(\bigg(\left(\{2^{e_1}-1\}\times\left[0,\frac{2^{e_1}-2}{2}\right]\right)\\&\cup\left(\left[\frac{2^{e_1}-2}{2},2^{e_1}-1\right]\times \left\{\frac{2^{e_1}-2}{2}\right
	\}\right)\\
	&\cup\left( \left\{\frac{2^{e_1}-2}{2}\right\}\times \left( \frac{2^{e_1}-2}{2},2^{e_1}-1\right]\right)\\
	&\cup \left( \left[0,\frac{2^{e_1}-2}{2}\right]\times \left\{2^{e_1}-1\right\}\right)\\
	&\cup\{(2^{e_1},0),(0, 2^{e_1})\}\bigg)\cap\mathbb{N}^2 \bigg).
	\end{align*}
	Additionally, we have the following equalities 
	\begin{itemize}
	\item $\mathcal{R}_{e_1,e_2}=\left(\bigcup_{x\in\frac{1}{p^{e_1}}\V_{\underline{I}}^{\m}(p^{e_1})}[0,x_1]\times\cdots\times[0,x_t]\right)\cap\frac{1}{p^{e_1+e_2}}\NN^2,$
	\item $\mathcal{L}_{e_1,e_2}=
		H_{e_1,e_2}\left(  \bigcup^1_{j=0} \left(\partial \left(\frac{1}{p^{e_1}}\V_{\underline{I}}^{J_\bullet}(p^{e_1})\cup \mathcal{B}(\underline{I})_{e_1}\right)+\frac{j}{p^{e_1}}\mathbf{1}\right)\right).$
	\end{itemize}
	
	The following figures show the regions of interest in the case $e_1=2, e_2=1, \mu=1$.
	The blue circles represent $\frac{1}{p^{e_1}}\V_{\underline{I}}^\m(p^{e_1})$.
	The blue circles together with the red squares represent $\mathcal{R}_{e_1,e_2}$.
	The border points of $\frac{1}{p^{e_1}}\V_{\underline{I}}^\m(p^{e_1})\cup \mathcal{B}(\underline{I})_{e_1}$ are represented by the green triangles.
	The orange stars represent the elements of the set $\mathcal{L}_{e_1,e_2}$.

	\begin{center}
	\begin{tikzpicture}
		\draw [->](0,0) --(5.5,0);
		\draw [->](0,0) --(0,5.5);
		\draw[step=.5 cm, gray!40, very thin](0,0) grid (5.4,5.4);
		\draw[step=1 cm, gray](0,0) grid (5.4,5.4);

		\foreach \x/\xtext in {.25/\frac{1}{4},.5/\frac{1}{2},.75/\frac{3}{4},1, 1.25/\frac{5}{4}}	
		\draw (\x*4 cm,1pt) -- (\x*4 cm,-1pt) node[anchor=north] {$\xtext$};
		\foreach \y/\ytext in {.25/\frac{1}{4},.5/\frac{1}{2},.75/\frac{3}{4},1,1.25/\frac{5}{4}}
		\draw (1pt,\y*4 cm) -- (-1pt,\y*4 cm) node[anchor=east] {$\ytext$};

		\node[circle, fill=blue, inner sep=1.5pt] at (0,0){};
		\node[circle, fill=blue, inner sep=1.5pt] at (0,1){};
		\node[circle, fill=blue, inner sep=1.5pt] at (0,2){};
		\node[circle, fill=blue, inner sep=1.5pt] at (0,3){};
	
		\node[circle, fill=blue, inner sep=1.5pt] at (1,0){};
		\node[circle, fill=blue, inner sep=1.5pt] at (1,1){};
		\node[circle, fill=blue, inner sep=1.5pt] at (1,2){};
		\node[circle, fill=blue, inner sep=1.5pt] at (1 ,3){};
	
		\node[circle, fill=blue, inner sep=1.5pt] at (2 ,0){};
		\node[circle, fill=blue, inner sep=1.5pt] at (2,1){};
	
		\node[circle, fill=blue, inner sep=1.5pt] at (3,0){};
		\node[circle, fill=blue, inner sep=1.5pt] at (3,1){};
	
	
		\node[fill=red, inner sep=1.5pt] at (0,0.5){};
		\node[fill=red, inner sep=1.5pt] at (0,1.5){};
		\node[fill=red, inner sep=1.5pt] at (0,2.5){};
	
		\node[fill=red, inner sep=1.5pt] at (0.5,0){};
		\node[fill=red, inner sep=1.5pt] at (0.5,0.5){};
		\node[fill=red, inner sep=1.5pt] at (0.5,1){};
		\node[fill=red, inner sep=1.5pt] at (0.5,1.5){};
		\node[fill=red, inner sep=1.5pt] at (0.5,2){};
		\node[fill=red, inner sep=1.5pt] at (0.5,2.5){};
		\node[fill=red, inner sep=1.5pt] at (0.5,3){};
	
		\node[fill=red, inner sep=1.5pt] at (1,0.5){};
		\node[fill=red, inner sep=1.5pt] at (1,1.5){};
		\node[fill=red, inner sep=1.5pt] at (1,2.5){};
	
		\node[fill=red, inner sep=1.5pt] at (1.5,0){};
		\node[fill=red, inner sep=1.5pt] at (1.5,0.5){};
		\node[fill=red, inner sep=1.5pt] at (1.5,1){};
	
		\node[fill=red, inner sep=1.5pt] at (2,0.5){};
	
		\node[fill=red, inner sep=1.5pt] at (2.5,0){};
		\node[fill=red, inner sep=1.5pt] at (2.5,0.5){};
		\node[fill=red, inner sep=1.5pt] at (2.5,1){};
	
		\node[fill=red, inner sep=1.5pt] at (3,0.5){};
	
	

	\end{tikzpicture}
	\begin{tikzpicture}
		\definecolor{darkgreen}{rgb}{.33,.42,.18}
		\draw [->](0,0) --(5.5,0);
		\draw [->](0,0) --(0,5.5);
		\draw[step=.5 cm, gray!40, very thin](0,0) grid (5.4,5.4);
		\draw[step=1 cm, gray](0,0) grid (5.4,5.4);
	
		\foreach \x/\xtext in {.25/\frac{1}{4},.5/\frac{1}{2},.75/\frac{3}{4},1, 1.25/\frac{5}{4}}	
		\draw (\x*4 cm,1pt) -- (\x*4 cm,-1pt) node[anchor=north] {$\xtext$};
		\foreach \y/\ytext in {.25/\frac{1}{4},.5/\frac{1}{2},.75/\frac{3}{4},1,1.25/\frac{5}{4}}
		\draw (1pt,\y*4 cm) -- (-1pt,\y*4 cm) node[anchor=east] {$\ytext$};
		\node[circle, fill=blue, inner sep=1.5pt] at (0,0){};
		\node[circle, fill=blue, inner sep=1.5pt] at (0,1){};
		\node[circle, fill=blue, inner sep=1.5pt] at (0,2){};
		\node[regular polygon, regular polygon sides=3, fill=darkgreen, inner sep=1.5pt] at (0,3){};
		\node[regular polygon, regular polygon sides=3, fill=darkgreen, inner sep=1.5pt] at (0,4){};
	
		\node[circle, fill=blue, inner sep=1.5pt] at (1,0){};
		\node[regular polygon, regular polygon sides=3, fill=darkgreen, inner sep=1.5pt] at (1,1){};
		\node[regular polygon, regular polygon sides=3, fill=darkgreen, inner sep=1.5pt] at (1,2){};
		\node[regular polygon, regular polygon sides=3, fill=darkgreen, inner sep=1.5pt] at (1 ,3){};
	
		\node[circle, fill=blue, inner sep=1.5pt] at (2 ,0){};
		\node[regular polygon, regular polygon sides=3, fill=darkgreen, inner sep=1.5pt] at (2,1){};
	
		\node[regular polygon, regular polygon sides=3, fill=darkgreen, inner sep=1.5pt] at (3,0){};
		\node[regular polygon, regular polygon sides=3, fill=darkgreen, inner sep=1.5pt] at (3,1){};
	
		\node[regular polygon, regular polygon sides=3, fill=darkgreen, inner sep=1.5pt] at (4,0){};
	\end{tikzpicture}
	\end{center}
	\begin{center}
	\begin{tikzpicture}
		\definecolor{darkgreen}{rgb}{.33,.42,.18}
		\draw [->](0,0) --(5.5,0);
		\draw [->](0,0) --(0,5.5);
		\draw[step=.5 cm, gray!40, very thin](0,0) grid (5.4,5.4);
		\draw[step=1 cm, gray](0,0) grid (5.4,5.4);
	
		\foreach \x/\xtext in {.25/\frac{1}{4},.5/\frac{1}{2},.75/\frac{3}{4},1, 1.25/\frac{5}{4}}	
		\draw (\x*4 cm,1pt) -- (\x*4 cm,-1pt) node[anchor=north] {$\xtext$};
		\foreach \y/\ytext in {.25/\frac{1}{4},.5/\frac{1}{2},.75/\frac{3}{4},1,1.25/\frac{5}{4}}
		\draw (1pt,\y*4 cm) -- (-1pt,\y*4 cm) node[anchor=east] {$\ytext$};
		\node[circle, fill=blue, inner sep=1.5pt] at (0,0){};
		\node[circle, fill=blue, inner sep=1.5pt] at (0,1){};
		\node[circle, fill=blue, inner sep=1.5pt] at (0,2){};
	
		\node[circle, fill=blue, inner sep=1.5pt] at (1,0){};
	
		\node[circle, fill=blue, inner sep=1.5pt] at (2 ,0){};
	
	
	
		\node[star, fill=orange, inner sep=1.5pt] at (0,2.5){};
		\node[star, fill=orange, inner sep=1.5pt] at (0,3){};
		\node[star, fill=orange, inner sep=1.5pt] at (0,3.5){};
		\node[star, fill=orange, inner sep=1.5pt] at (0,4){};
	
		\node[star, fill=orange, inner sep=1.5pt] at (0.5,0.5){};
		\node[star, fill=orange, inner sep=1.5pt] at (0.5,1){};
		\node[star, fill=orange, inner sep=1.5pt] at (0.5,1.5){};
		\node[star, fill=orange, inner sep=1.5pt] at (0.5,2){};
		\node[star, fill=orange, inner sep=1.5pt] at (0.5,2.5){};
		\node[star, fill=orange, inner sep=1.5pt] at (0.5,3){};
		\node[star, fill=orange, inner sep=1.5pt] at (0.5,3.5){};
		\node[star, fill=orange, inner sep=1.5pt] at (0.5,4){};
		\node[star, fill=orange, inner sep=1.5pt] at (0.5,4.5){};
		\node[star, fill=orange, inner sep=1.5pt] at (0.5,5){};
	
		\node[star, fill=orange, inner sep=1.5pt] at (1,0.5){};
		\node[star, fill=orange, inner sep=1.5pt] at (1,1){};
		\node[star, fill=orange, inner sep=1.5pt] at (1,1.5){};
		\node[star, fill=orange, inner sep=1.5pt] at (1,2){};
		\node[star, fill=orange, inner sep=1.5pt] at (1,2.5){};
		\node[star, fill=orange, inner sep=1.5pt] at (1,3){};
		\node[star, fill=orange, inner sep=1.5pt] at (1,3.5){};
		\node[star, fill=orange, inner sep=1.5pt] at (1,4){};
		\node[star, fill=orange, inner sep=1.5pt] at (1,4.5){};
		\node[star, fill=orange, inner sep=1.5pt] at (1,5){};
	
		\node[star, fill=orange, inner sep=1.5pt] at (1.5,0.5){};
		\node[star, fill=orange, inner sep=1.5pt] at (1.5,1){};
		\node[star, fill=orange, inner sep=1.5pt] at (1.5,1.5){};
		\node[star, fill=orange, inner sep=1.5pt] at (1.5,2){};
		\node[star, fill=orange, inner sep=1.5pt] at (1.5,2.5){};
		\node[star, fill=orange, inner sep=1.5pt] at (1.5,3){};
		\node[star, fill=orange, inner sep=1.5pt] at (1.5,3.5){};
		\node[star, fill=orange, inner sep=1.5pt] at (1.5,4){};
		
		\node[star, fill=orange, inner sep=1.5pt] at (2,0.5){};
		\node[star, fill=orange, inner sep=1.5pt] at (2,1){};
		\node[star, fill=orange, inner sep=1.5pt] at (2,1.5){};
		\node[star, fill=orange, inner sep=1.5pt] at (2,2){};
		\node[star, fill=orange, inner sep=1.5pt] at (2,2.5){};
		\node[star, fill=orange, inner sep=1.5pt] at (2,3){};
		\node[star, fill=orange, inner sep=1.5pt] at (2,3.5){};
		\node[star, fill=orange, inner sep=1.5pt] at (2,4){};
		
		\node[star, fill=orange, inner sep=1.5pt] at (2.5,0){};
		\node[star, fill=orange, inner sep=1.5pt] at (2.5,0.5){};
		\node[star, fill=orange, inner sep=1.5pt] at (2.5,1){};
		\node[star, fill=orange, inner sep=1.5pt] at (2.5,1.5){};
		\node[star, fill=orange, inner sep=1.5pt] at (2.5,2){};
		
		\node[star, fill=orange, inner sep=1.5pt] at (3,0){};
		\node[star, fill=orange, inner sep=1.5pt] at (3,0.5){};
		\node[star, fill=orange, inner sep=1.5pt] at (3,1){};
		\node[star, fill=orange, inner sep=1.5pt] at (3,1.5){};
		\node[star, fill=orange, inner sep=1.5pt] at (3,2){};
	
		\node[star, fill=orange, inner sep=1.5pt] at (3.5,0){};
		\node[star, fill=orange, inner sep=1.5pt] at (3.5,0.5){};
		\node[star, fill=orange, inner sep=1.5pt] at (3.5,1){};
		\node[star, fill=orange, inner sep=1.5pt] at (3.5,1.5){};
		\node[star, fill=orange, inner sep=1.5pt] at (3.5,2){};
	
		\node[star, fill=orange, inner sep=1.5pt] at (4,0){};
		\node[star, fill=orange, inner sep=1.5pt] at (4,0.5){};
		\node[star, fill=orange, inner sep=1.5pt] at (4,1){};
		\node[star, fill=orange, inner sep=1.5pt] at (4,1.5){};
		\node[star, fill=orange, inner sep=1.5pt] at (4,2){};
		
		\node[star, fill=orange, inner sep=1.5pt] at (4.5,0.5){};
		\node[star, fill=orange, inner sep=1.5pt] at (4.5,1){};
	
		\node[star, fill=orange, inner sep=1.5pt] at (5,0.5){};
		\node[star, fill=orange, inner sep=1.5pt] at (5,1){};
	\end{tikzpicture}
\end{center}

We explore this discussion further in Example \ref{ex}.
\end{example}
\begin{lemma}\label{lemma-cardinal}
	Let $\underline{I}=I_1,\ldots, I_t\subseteq R$ be a sequence of ideals, and $J_\bullet=\{J_{p^e}\}_{e \in \NN}$ be a $p$-family of ideals in $R$ such that $I_1,\ldots, I_t\subseteq \sqrt{J_1}$. Then, for every positive integer $e_1$, we have that
	\[\left\vert\partial \left(\frac{1}{p^{e_1}}\V_{\underline{I}}^{J_\bullet}(p^{e_1})\cup \mathcal{B}(\underline{I})_{e_1}\right)\right\vert\leq
	p^{e_1(t-1)}\sum_{n=1}^t\left(\prod_{j=1}^{n-1}(\mu(I_j)\ell_j+1)\prod_{j=n+1}^t(\mu(I_j)\ell_j+1)\right).\]
\end{lemma}

\begin{proof}
	Let $e_1$ be a positive integer. 
	Let $\phi:\partial \left(\frac{1}{p^{e_1}}\V_{\underline{I}}^{J_\bullet}(p^{e_1})\cup \mathcal{B}(\underline{I})_{e_1}\right)\rightarrow \bigcup_{j=1}^t\left(\mathcal{B}_j(\underline{I})_{e_1}\times \{j\}\right)$ the map defined by
	 \[\phi(x_1,\ldots.,x_t)=(\widehat{y}^s,s)\]
	where 
	$$y=(x_1,\ldots.,x_t)-\min\{x_i:i\in \{1,\ldots,t\}\}\mathbf{1}$$ 
	and 
	\[s=\min\{i\in\{1,\ldots,t\}:x_i=\min\{x_j:j\in \{1,\ldots,t\}\}\}.\]
	Notice that, if $(x_1,\ldots,x_t)\in \frac{1}{p^{e_1}}\V_{\underline{I}}^{J_\bullet}(p^{e_1})$, we have that $y\in \frac{1}{p^{e_1}}\V_{\underline{I}}^{J_\bullet}(p^{e_1})$ and $\widehat{y}^s\in\mathcal{B}_s(\underline{I})_{e_1}$ by Remark \ref{finiteness}.
	On the other hand, if $x=(x_1,\ldots,x_t)\in\mathcal{B}(\underline{I})_{e_1}$, then $\min\{x_i:i\in \{1,\ldots,t\}\})=0$ and $y=x$.
	 Hence $\widehat{y}^s\in \mathcal{B}_s(\underline{I})_{e_1}$.
	Thus $\phi$ is well-defined. 
	Now suppose $\phi(x_1,\ldots,x_n)=\phi(z_1,\ldots.,z_n)$. 
	It follows that  $(z_1,\ldots.,z_t)-z_s\mathbf{1} =(x_1,\ldots,x_t)-x_s\mathbf{1}$.
	We can assume without loss of generality that $z_s\geq x_s$. 
	Then, we have that $(z_1,\ldots,z_t)=(x_1,\ldots,x_t)+(z_s-x_s)\mathbf{1}$. 
	If $z_s>x_s$, then $z_i\geq x_i+\frac{1}{p^{e_1}}$ and $z_i>0$ for every $i\in \{1,\ldots,t\}$.
	Consequently, $(z_1,\ldots,z_t)\in \frac{1}{p^{e_1}}\V_{\underline{I}}^{J_\bullet}(p^{e_1})$ and 
	$(x_1,\ldots,x_t)+\frac{1}{p^{e_1}}\mathbf{1} \in \frac{1}{p^{e_1}}\V_{\underline{I}}^{J_\bullet}(p^{e_1})\cup\mathcal{B}(\underline{I})_{e_1}$, which contradicts that $(x_1,\ldots, x_t)\in \partial\left(\frac{1}{p^{e_1}}\V_{\underline{I}}^{J_\bullet}(p^{e_1})\cup\mathcal{B}(\underline{I})_{e_1}\right)$.
	Hence, $\phi$ is injective. 
	Therefore, we have that
	\[\left\vert\partial \left(\frac{1}{p^{e_1}}\V_{\underline{I}}^{J_\bullet}(p^{e_1})\cup \mathcal{B}(\underline{I})_{e_1}\right)\right\vert\leq
	p^{e_1(t-1)}\sum_{n=1}^t\left(\prod_{j=1}^{n-1}(\mu(I_j)\ell_j+1)\prod_{j=n+1}^t(\mu(I_j)\ell_j+1)\right).\]
\end{proof}

\begin{remark}\label{remarkH}
Let $e_1,e_2 \in \NN$, $C$ be a subset of $\frac{1}{p^{e_1}}\NN^t$, and $x$ be an element of $\frac{1}{p^{e_1+e_2}}\NN^t$. Suppose that $\frac{1}{p^{e_1}}\left\lceil p^{e_1}x\right \rceil \in C$. For every $i\in\{1,\ldots,t\}$ we have that
	\[ x_i+\frac{1}{p^{e_1}}=\frac{1}{p^{e_1}}\left(p^{e_1}x_i+1\right)> \frac{1}{p^{e_1}}\left \lceil p^{e_1}x_i\right \rceil.\]
	Thus, 
	\[\frac{1}{p^{e_1}}\left \lceil p^{e_1}x_i\right \rceil-\frac{1}{p^{e_1}}<x_i\leq
	 \frac{1}{p^{e_1}}\left \lceil p^{e_1}x_i\right \rceil.\]
Therefore, $x \in H_{e_1,e_2}(C)$.	 
\end{remark} 

We now start a series of lemmas towards proving Theorem \ref{MainThmExistence}. 
Roughly speaking, throughout the following lemmas we compare $\frac{1}{p^{e_1+e_2}}\V_{\underline{I}}^{J_\bullet}(p^{e_1+e_2})$ with $\frac{1}{p^{e_1}}\V_{\underline{I}}^{J_\bullet}(p^{e_1})$ while considering both as subsets of $\frac{1}{p^{e_1+e_2}}\NN^t$. To do this, we use $H_{e_1,e_2}$ to fill $\frac{1}{p^{e_1}}\V_{\underline{I}}^{J_\bullet}(p^{e_1})$, obtaining $\mathcal{R}_{e_1,e_2}$, and we use $\mathcal{L}_{e_1,e_2}$ to control the behavior of  $\frac{1}{p^{e_1+e_2}}\V_{\underline{I}}^{J_\bullet}(p^{e_1+e_2})\setminus \mathcal{R}_{e_1,e_2}$.

\begin{lemma}\label{lem2}
Let $\underline{I}=I_1,\ldots, I_t\subseteq R$ be a sequence of ideals, and $J_\bullet=\{J_{p^e}\}_{e \in \NN}$ be a $p$-family of ideals in $R$ such that $I_1,\ldots, I_t\subseteq \sqrt{J_1}$. We have that
	\[
	\frac{1}{p^{e_1+e_2}}\V_{\underline{I}}^{J_\bullet}(p^{e_1+e_2})\subseteq \mathcal{R}_{e_1,e_2}\cup\mathcal{L}_{e_1, e_2}.\]
\end{lemma}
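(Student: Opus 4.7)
The plan is to pick an arbitrary $a=(a_1,\dots,a_t)\in\V_{\underline{I}}^{J_\bullet}(p^{e_1+e_2})$, set $b_i:=a_i/p^{e_1+e_2}$ and $\tilde{a}_i:=\lceil a_i/p^{e_2}\rceil$, and show $b\in\mathcal{R}_{e_1,e_2}\cup\mathcal{L}_{e_1,e_2}$. The ceiling gives $\tfrac{\tilde{a}_i}{p^{e_1}}-\tfrac{1}{p^{e_1}}<b_i\leq\tfrac{\tilde{a}_i}{p^{e_1}}$, so automatically $b\in H_{e_1,e_2}(\{\tilde{a}/p^{e_1}\})$; hence it suffices to place the point $\tilde{a}/p^{e_1}$ either inside $\tfrac{1}{p^{e_1}}\V_{\underline{I}}^{J_\bullet}(p^{e_1})$ (which gives $b\in\mathcal{R}_{e_1,e_2}$) or as a diagonal shift by $\tfrac{k}{p^{e_1}}\mathbf{1}$, $0\leq k\leq\mu$, of a border point of $\tfrac{1}{p^{e_1}}\V_{\underline{I}}^{J_\bullet}(p^{e_1})\cup\mathcal{B}(\underline{I})_{e_1}$ (which gives $b\in\mathcal{L}_{e_1,e_2}$), where $\mu:=\max_n\mu(I_n)$. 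The first alternative is immediate when $\tilde{a}\in\V_{\underline{I}}^{J_\bullet}(p^{e_1})$.

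Assume henceforth $\tilde{a}\notin\V_{\underline{I}}^{J_\bullet}(p^{e_1})$, and walk backward along the diagonal by defining
$$k^\ast:=\min\bigl\{k\in\NN:\tilde{a}-k\mathbf{1}\geq 0\text{ and }\tfrac{1}{p^{e_1}}(\tilde{a}-k\mathbf{1})\in\tfrac{1}{p^{e_1}}\V_{\underline{I}}^{J_\bullet}(p^{e_1})\cup\mathcal{B}(\underline{I})_{e_1}\bigr\}.$$
By minimality, $u:=\tfrac{1}{p^{e_1}}(\tilde{a}-k^\ast\mathbf{1})$ is a border point of $\tfrac{1}{p^{e_1}}\V_{\underline{I}}^{J_\bullet}(p^{e_1})\cup\mathcal{B}(\underline{I})_{e_1}$, and $\tilde{a}/p^{e_1}=u+\tfrac{k^\ast}{p^{e_1}}\mathbf{1}$; the whole proof then reduces to the bound $k^\ast\leq\mu$. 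If $\min_i\tilde{a}_i<\mu$, pick $i_0$ attaining this minimum and take $k=\tilde{a}_{i_0}\leq\mu-1$: the $i_0$-th coordinate of $\tilde{a}-k\mathbf{1}$ vanishes, while Remark \ref{finiteness} forces $\tilde{a}_j-\tilde{a}_{i_0}\in[0,\mu(I_j)\ell_j p^{e_1}]$ for $j\neq i_0$, so $\tfrac{1}{p^{e_1}}(\tilde{a}-k\mathbf{1})\in\mathcal{B}(\underline{I})_{e_1}$.

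Otherwise $\tilde{a}_i\geq\mu$ for every $i$, and I claim $s:=\tilde{a}-\mu\mathbf{1}\in\V_{\underline{I}}^{J_\bullet}(p^{e_1})$. Apply the preceding lemma to each $I_i$ with $s_i:=\tilde{a}_i-\mu$ and exponent $p^{e_2}$; the numerical hypothesis $a_i\geq(\mu(I_i)+s_i-1)p^{e_2}$ follows from $\tilde{a}_ip^{e_2}-a_i<p^{e_2}$ together with $\mu(I_i)\leq\mu$, and yields $I_i^{a_i}=I_i^{a_i-s_ip^{e_2}}(I_i^{[p^{e_2}]})^{s_i}$. Multiplying over $i$ and using $(I_i^{s_i})^{[p^{e_2}]}=(I_i^{[p^{e_2}]})^{s_i}$ gives
$$\underline{I}^a=\underline{I}^{\,a-sp^{e_2}}\cdot(\underline{I}^s)^{[p^{e_2}]}.$$
If $\underline{I}^s\subseteq J_{p^{e_1}}$ then, iterating the $p$-family containment, $(\underline{I}^s)^{[p^{e_2}]}\subseteq J_{p^{e_1}}^{[p^{e_2}]}\subseteq J_{p^{e_1+e_2}}$, hence $\underline{I}^a\subseteq J_{p^{e_1+e_2}}$, contradicting $a\in\V_{\underline{I}}^{J_\bullet}(p^{e_1+e_2})$. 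Thus $s\in\V_{\underline{I}}^{J_\bullet}(p^{e_1})$, forcing $k^\ast\leq\mu$ and completing the proof. The main obstacle is this algebraic step: gluing the single-ideal Frobenius factorization of the preceding lemma into a clean identity for the product $\underline{I}^a$, together with the secondary bookkeeping that guides the diagonal walk into $\mathcal{B}(\underline{I})_{e_1}$ as soon as some coordinate of $\tilde{a}$ is small.
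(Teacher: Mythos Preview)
Your proof is correct and follows essentially the same strategy as the paper: lift $a$ to the coarser lattice via $\tilde a=\lceil p^{e_1}x\rceil$, walk diagonally backward to hit a border point of $\tfrac{1}{p^{e_1}}\V_{\underline I}^{J_\bullet}(p^{e_1})\cup\mathcal{B}(\underline I)_{e_1}$, and bound the number of steps by $\mu$ using Lemma~\ref{DNBP1} together with the $p$-family containment $J_{p^{e_1}}^{[p^{e_2}]}\subseteq J_{p^{e_1+e_2}}$. The only organizational difference is that the paper applies the Frobenius factorization at the border point to obtain $\underline{I}^{p^{e_2}(p^{e_1}a+\mu\mathbf{1})}\subseteq J_{p^{e_1+e_2}}$ and reads off the bound by contradiction, whereas you apply it at $\tilde a$ directly to force $\tilde a-\mu\mathbf{1}\in\V_{\underline I}^{J_\bullet}(p^{e_1})$; these are equivalent rearrangements of the same computation.
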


\begin{proof}
Let $x=(x_1,\ldots, x_t)\in \frac{1}{p^{e_1+e_2}}\V_{\underline{I}}^{J_\bullet}(p^{e_1+e_2})$ be such that $x\not\in \mathcal{R}_{e_1,e_2}.$ By Remark \ref{finiteness}, $p^{e_1+e_2}x_n \leq\mu(I_n)\ell_n p^{e_1+e_2}$ for each $n\in\{1,\ldots,t\}$.
	Hence,  $p^{e_1}x_n \leq\mu(I_n)\ell_np^{e_1}$ and $\ceil{p^{e_1}x_n} \leq\mu(I_n)\ell_np^{e_1}$ for each $n\in\{1,\ldots,t\}$.
	Thus,  if $x_j=\operatorname{min}\{x_1,\ldots,x_t\}$, we have
	$$\frac{1}{p^{e_1}}\left(\left\lceil p^{e_1}x\right \rceil-\left \lceil p^{e_1}x_j\right\rceil\mathbf{1}\right)\in \mathcal{B}(\underline{I})_{e_1}.
	$$ Hence, $\left\{y\in \frac{1}{p^{e_1}}\ZZ\mid\frac{1}{p^{e_1}}\left \lceil p^{e_1}x\right \rceil-y\mathbf{1}\in \left(\frac{1}{p^{e_1}}\V_{\underline{I}}^{J_\bullet}(p^{e_1}) \cup \mathcal{B}(\underline{I})_{e_1}\right)\right\}$ is not empty.
	 
 Since $x \not\in\mathcal{R}_{e_1,e_2}$, we have that
	$\frac{1}{p^{e_1}}\left\lceil p^{e_1}x\right \rceil+y\mathbf{1} \not\in \frac{1}{p^{e_1}}\V_{\underline{I}}^{J_\bullet}(p^{e_1})$ 
	for every $y\in \frac{1}{p^{e_1}}\NN$ by Remark \ref{remarkH}. As a consequence, $\left\{y\in \frac{1}{p^{e_1}}\ZZ\mid\frac{1}{p^{e_1}}\left \lceil p^{e_1}x\right \rceil-y\mathbf{1}\in \left(\frac{1}{p^{e_1}}\V_{\underline{I}}^{J_\bullet}(p^{e_1}) \cup \mathcal{B}(\underline{I})_{e_1}\right)\right\}$ is bounded below by $0$.
 
 We take $a=\frac{1}{p^{e_1}}\left \lceil p^{e_1}x\right \rceil-r\mathbf{1}$, where $$r=\min\left\{y\in \frac{1}{p^{e_1}}\ZZ\mid\frac{1}{p^{e_1}}\left \lceil p^{e_1}x\right \rceil-y\mathbf{1}\in \left(\frac{1}{p^{e_1}}\V_{\underline{I}}^{J_\bullet}(p^{e_1}) \cup \mathcal{B}(\underline{I})_{e_1}\right)\right\}.$$
	We note that $a\in \partial \left( \frac{1}{p^{e_1}}\V_{\underline{I}}^{J_\bullet}(p^{e_1})\cup \mathcal{B}(\underline{I})_{e_1}\right)$. 
	
On the other hand, by Lemma \ref{DNBP1} with $s=p^{e_1}a_1,\ldots., p^{e_1}a_t$, we have
	\begin{align*}
	\underline{I}^{p^{e_2}(p^{e_1}a+\mu\mathbf{1})}&=I_1^{p^{e_2}(p^{e_1}a_1+\mu)}\cdots I_t^{p^{e_2}(p^{e_1}a_t+\mu)}\\ 
	&= I_1^{\mu p^{e_2}}(I_1^{[p^{e_2}]})^{p^{e_1}a_1}\cdots I_t^{\mu p^{e_2}}(I_t^{[p^{e_2}]})^{p^{e_1}a_t}\\
	&\subseteq I_1^{[p^{e_2}]}(I_1^{p^{e_1}a_1})^{[p^{e_2}]}\cdots I_t^{[p^{e_2}]}(I_t^{p^{e_1}a_t})^{[p^{e_2}]}\\
	&=(I_1^{p^{e_1}a_1+1})^{[p^{e_2}]}\cdots (I_t^{p^{e_1}a_t+1})^{[p^{e_2}]}\\
	&=(\underline{I}^{p^{e_1}a+\mathbf{1}})^{[p^{e_2}]}.
	\end{align*}
Since $a\in \partial \left( \frac{1}{p^{e_1}}\V_{\underline{I}}^{J_\bullet}(p^{e_1})\cup \mathcal{B}(\underline{I})_{e_1}\right)$, $p^{e_1}a+\mathbf{1} \not \in \V_{\underline{I}}^{J_\bullet}(p^{e_1})$. Then,
\begin{align*}
\underline{I}^{p^{e_2}(p^{e_1}a+\mu\mathbf{1})}&=(\underline{I}^{p^{e_1}a+\mathbf{1}})^{[p^{e_2}]}\\
&\subseteq J_{p^{e_1}}^{[p^{e_2}]}\\
&\subseteq J_{p^{e_1+e_2}}.
\end{align*}
Thus, $p^{e_2}(p^{e_1}a+\mu\mathbf{1}) \not \in \V_{\underline{I}}^{J_\bullet}(p^{e_1+e_2})$. Hence, there exists $k\in \{1,\ldots,t\}$ such that $p^{e_1+e_2}x_k \leq p^{e_2}(p^{e_1}a_k+\mu)$. This implies, $p^{e_1}x_k \leq \left \lceil p^{e_1}x_k\right \rceil -p^{e_1}r+\mu$, and so $\left \lceil p^{e_1}x_k\right \rceil \leq \left \lceil p^{e_1}x_k\right \rceil -p^{e_1}r+\mu$. Then, we have that $0\leq r\leq \frac{1}{p^{e_1}}\mu$. 
	 
Since $\frac{1}{p^{e_1}}\left \lceil p^{e_1}x\right \rceil=a+r\mathbf{1} \in  \bigcup^\mu_{j=0} \left(\partial \left(\frac{1}{p^{e_1}}\V_{\underline{I}}^{J_\bullet}(p^{e_1})\cup \mathcal{B}(\underline{I})_{e_1}\right)+\frac{j}{p^{e_1}}\mathbf{1}\right)$, it follows that $x\in H_{e_1,e_2}\left( \bigcup^\mu_{j=0} \left(\partial \left(\frac{1}{p^{e_1}}\V_{\underline{I}}^{J_\bullet}(p^{e_1})\cup \mathcal{B}(\underline{I})_{e_1}\right)+\frac{j}{p^{e_1}}\mathbf{1}\right)\right)=\mathcal{L}_{e_1, e_2}$ by Remark \ref{remarkH}. 
\end{proof}   

\begin{lemma}\label{lem3}
Let $\underline{I}=I_1,\ldots, I_t\subseteq R$ be a sequence of ideals, and $J_\bullet=\{J_{p^e}\}_{e \in \NN}$ be a $p$-family of ideals in $R$ such that $I_1,\ldots, I_t\subseteq \sqrt{J_1}$. For each $e_1\in \mathbb{N}$, there exists a subset $A_{e_1}$ of $\frac{1}{p^{e_1}}\NN^t$ such that  
	\begin{enumerate}
	\item$\frac{1}{p^{e_1}}\V_{\underline{I}}^{J_\bullet}(p^{e_1})\subseteq A_{e_1}$,
	\item $\frac{1}{p^{e_1+e_2}}\V_{\underline{I}}^{J_\bullet}(p^{e_1+e_2})\subseteq H_{e_1,e_2}(A_{e_1})$ for all $e_2\in \NN$, and
	\item
	$\lim_{e_1\to \infty}\frac{\vert A_{e_1}  \setminus\frac{1}{p^{e_1}}\V_{\underline{I}}^{J_\bullet}(p^{e_1})\vert}{p^{e_1t}}=0.$
	\end{enumerate}
\end{lemma}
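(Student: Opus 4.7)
The plan is to take
\[A^{e_1}:=\tfrac{1}{p^{e_1}}\V_{\underline{I}}^{J_\bullet}(p^{e_1})\ \cup\ \left(\partial\left(\tfrac{1}{p^{e_1}}\V_{\underline{I}}^{J_\bullet}(p^{e_1})\cup\mathcal{B}(\underline{I})_{e_1}\right)+\tfrac{1}{p^{e_1}}\{0,\ldots,\mu\}\mathbf{1}\right),\]
i.e.\ to adjoin to $\tfrac{1}{p^{e_1}}\V_{\underline{I}}^{J_\bullet}(p^{e_1})$ the thin ``boundary layer'' of points that are needed once the lattice is refined by a factor of $p^{e_2}$. This is precisely the set whose $H_{e_1,e_2}$-thickening produces $\mathcal{R}_{e_1,e_2}\cup\mathcal{L}_{e_1,e_2}$, the two pieces appearing in Lemma \ref{lem2}.

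Part (1) is immediate from the construction. For part (2), I observe that $H_{e_1,e_2}$ is defined as a union indexed by its input points, so it distributes over unions; applying this to the two pieces of $A^{e_1}$ gives
\[H_{e_1,e_2}(A^{e_1})=\mathcal{R}_{e_1,e_2}\cup\mathcal{L}_{e_1,e_2},\]
and Lemma \ref{lem2} immediately yields the required containment.

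For part (3), the definition of $A^{e_1}$ yields directly
\[\left|A^{e_1}\setminus\tfrac{1}{p^{e_1}}\V_{\underline{I}}^{J_\bullet}(p^{e_1})\right|\leq(\mu+1)\left|\partial\left(\tfrac{1}{p^{e_1}}\V_{\underline{I}}^{J_\bullet}(p^{e_1})\cup\mathcal{B}(\underline{I})_{e_1}\right)\right|,\]
since shifting by $\tfrac{1}{p^{e_1}}\{0,\ldots,\mu\}\mathbf{1}$ inflates cardinality by at most a factor of $\mu+1$. The boundary-counting remark immediately preceding this lemma then bounds the right-hand side by $C\,p^{e_1(t-1)}$, where $C$ depends only on the data $\underline{I}$ and $J_\bullet$ (through the constants $\mu(I_j)$ and $\ell_j$). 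Dividing by $p^{e_1 t}$ gives a bound of order $p^{-e_1}$, which tends to $0$ as $e_1\to\infty$.

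There is essentially no serious obstacle: the heavy lifting has already been performed by Lemma \ref{lem2} and by the boundary-count remark, and the present lemma is the bookkeeping step that packages those two facts into a single object $A^{e_1}$ with the three desired properties. The one delicate point worth highlighting is that the decay in (3) relies crucially on the boundary being of codimension one---i.e.\ having size $O(p^{e_1(t-1)})$ rather than $O(p^{e_1 t})$---which is exactly what the injection $\phi$ into a union of $(t-1)$-dimensional faces $\mathcal{B}_n(\underline{I})_{e_1}$ constructed in the preceding remark supplies; without that reduction the lemma would fail.
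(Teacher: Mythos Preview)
Your proposal is correct and follows essentially the same approach as the paper: you define exactly the same set $A^{e_1}$, invoke Lemma~\ref{lem2} to obtain part~(2) via $H_{e_1,e_2}(A^{e_1})=\mathcal{R}_{e_1,e_2}\cup\mathcal{L}_{e_1,e_2}$, and use the boundary-counting remark to establish the $O(p^{e_1(t-1)})$ bound needed for part~(3). Your closing commentary on the role of the codimension-one boundary estimate is accurate and adds helpful perspective, but the argument itself matches the paper's proof.
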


\begin{proof}
	By Lemma \ref{lem2}, we have that
	\begin{align*}
	\frac{1}{p^{e_1+e_2}}\V_{\underline{I}}^{J_\bullet}(p^{e_1+e_2})&\subseteq \mathcal{R}_{e_1,e_2}\cup \mathcal{L}_{e_1,e_2}\\
	&\subseteq H_{e_1,e_2}\left(\frac{1}{p^{e_1}}\V_{\underline{I}}^{J_\bullet}(p^{e_1})\bigcup\left(  \bigcup^\mu_{j=0} \left(\partial \left(\frac{1}{p^{e_1}}\V_{\underline{I}}^{J_\bullet}(p^{e_1})\cup \mathcal{B}(\underline{I})_{e_1}\right)+\frac{j}{p^{e_1}}\mathbf{1}\right)\right) \right)
	\end{align*}
	Let $A_{e_1}=\frac{1}{p^{e_1}}\V_{\underline{I}}^{J_\bullet}(p^{e_1})\bigcup\left( \bigcup^\mu_{j=0} \left(\partial \left(\frac{1}{p^{e_1}}\V_{\underline{I}}^{J_\bullet}(p^{e_1})\cup \mathcal{B}(\underline{I})_{e_1}\right)+\frac{j}{p^{e_1}}\mathbf{1}\right)\right) $ 
	Then, \[\left\vert A_{e_1}  \setminus \frac{1}{p^{e_1}}\V_{\underline{I}}^{J_\bullet}(p^{e_1})\right\vert\leq p^{e_1(t-1)}(\mu+1)\sum_{n=1}^t\left(\prod_{j=1}^{n-1}(\mu(I_j)\ell_j+1)\prod_{j=n+1}^t(\mu(I_j)\ell_j+1)\right)\]
	by Lemma \ref{lemma-cardinal}. 
	Hence, we have that
	\begin{align*}
	0&\leq\liminf_{e_1\to \infty}\frac{\vert A_{e_1}  \setminus \frac{1}{p^{e_1}}\V_{\underline{I}}^{J_\bullet}(p^{e_1})\vert}{p^{e_1t}}\\
	&\leq\limsup_{e_1\to \infty}\frac{\vert A_{e_1}  \setminus \frac{1}{p^{e_1}}\V_{\underline{I}}^{J_\bullet}(p^{e_1})\vert}{p^{e_1t}}\\
	&\leq\limsup_{e_1\to \infty}\frac{(\mu+1)\sum_{n=1}^t\left(\prod_{j=1}^{n-1}(\mu(I_j)\ell_j+1)\prod_{j=n+1}^t(\mu(I_j)\ell_j+1)\right)}{p^{e_1}}\\
	&=0.
	\end{align*}
	It follows that 
	\[\lim_{e_1\to \infty}\frac{\vert A_{e_1}  \setminus \frac{1}{p^{e_1}}\V_{\underline{I}}^{J_\bullet}(p^{e_1})\vert}{p^{e_1t}}=0.\]
\end{proof}

We are now ready to prove the main result of this section. This result implies  Theorem \ref{MainThmExistence} in the introduction, because $\{J^{[p^e]}\}_{e \in \NN}$ is a $p$-family of ideals.

\begin{theorem}\label{ThmLimitExistsSec}
Let $\underline{I}=I_1,\ldots, I_t\subseteq R$ be a sequence of ideals, and $J_\bullet=\{J_{p^e}\}_{e \in \NN}$ be a $p$-family of ideals in $R$ such that $I_1,\ldots, I_t\subseteq \sqrt{J_1}$. Then, $\lim_{e\to \infty}\frac{\vert \V_{\underline{I}}^{J_\bullet}(p^{e})\vert}{p^{et}}$ exists. 
\end{theorem}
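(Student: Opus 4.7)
The plan is to deduce convergence from Lemma \ref{lem3} by comparing $|\V_{\underline{I}}^{J_\bullet}(p^{e_1+e_2})|$ with $|\V_{\underline{I}}^{J_\bullet}(p^{e_1})|$ up to a controlled error. Set $a_e = |\V_{\underline{I}}^{J_\bullet}(p^{e})|/p^{et}$; by Remark \ref{finiteness} this sequence is bounded, so $\ell = \liminf_{e\to\infty} a_e$ and $L = \limsup_{e\to\infty} a_e$ exist and are finite. The goal is to show $L \leq \ell$.

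The first key step is a counting observation: for any $C \subseteq \frac{1}{p^{e_1}}\NN^t$, the boxes $\{y \in \frac{1}{p^{e_1+e_2}}\NN^t : x_i - \frac{1}{p^{e_1}} < y_i \leq x_i\}$ indexed by $x \in C$ are pairwise disjoint (points in $C$ differ by at least $\frac{1}{p^{e_1}}$ in some coordinate), and each such box has exactly $p^{e_2 t}$ lattice points. Hence $|H_{e_1,e_2}(C)| = p^{e_2 t}|C|$. Applying this to $C = A^{e_1}$ from Lemma \ref{lem3}(2), we obtain
\[
|\V_{\underline{I}}^{J_\bullet}(p^{e_1+e_2})| \;\leq\; |H_{e_1,e_2}(A^{e_1})| \;=\; p^{e_2 t}|A^{e_1}|.
\]
Dividing by $p^{(e_1+e_2)t}$ and using Lemma \ref{lem3}(1), this rearranges to
\[
a_{e_1+e_2} \;\leq\; a_{e_1} + \varepsilon(e_1), \qquad \varepsilon(e_1) := \frac{\bigl|A^{e_1} \setminus \tfrac{1}{p^{e_1}}\V_{\underline{I}}^{J_\bullet}(p^{e_1})\bigr|}{p^{e_1 t}},
\]
valid for every $e_2 \in \NN$, and by Lemma \ref{lem3}(3) we have $\varepsilon(e_1) \to 0$ as $e_1 \to \infty$.

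To conclude, fix $\delta > 0$. Choose $e_1$ large enough so that $\varepsilon(e_1) < \delta/2$ and so that $a_{e_1} < \ell + \delta/2$ (possible by passing along a subsequence realizing the liminf). Then for every $e_2 \geq 0$, the above inequality yields $a_{e_1+e_2} < \ell + \delta$, and taking the limsup in $e_2$ gives $L \leq \ell + \delta$. Since $\delta > 0$ was arbitrary, $L \leq \ell$, so the limit exists.

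The substantive content of the proof has already been absorbed into Lemmas \ref{lem2} and \ref{lem3}; the main obstacle was the geometric bookkeeping around border points, and what remains here is only a clean subadditivity-type limit argument. The only subtlety to verify carefully is the disjointness of the half-open boxes defining $H_{e_1,e_2}$, which guarantees that no double-counting inflates the upper bound.
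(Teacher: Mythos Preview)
Your argument is correct and essentially identical to the paper's: both derive the inequality $a_{e_1+e_2}\le a_{e_1}+\varepsilon(e_1)$ from Lemma~\ref{lem3} via the box count $|H_{e_1,e_2}(A^{e_1})|\le p^{e_2t}|A^{e_1}|$ and then conclude $\limsup\le\liminf$ (the paper does this by taking $\limsup_{e_2}$ then $\liminf_{e_1}$, you by an equivalent $\delta$-argument). One tiny caveat: your claimed equality $|H_{e_1,e_2}(C)|=p^{e_2t}|C|$ is only an inequality $\le$ when $C$ meets a coordinate hyperplane (the half-open box at $x$ with $x_i=0$ contains a single point in the $i$-th coordinate, not $p^{e_2}$), but since only the upper bound is used, nothing is affected.
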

\begin{proof}
	For each $e_1\in \NN$, let $A_{e_1}$ be as in Lemma \ref{lem3}. 	
	Then, for each $e_1,e_2\in\NN$, we have 
 $$\frac{1}{p^{e_1+e_2}}\V_{\underline{I}}^{J_\bullet}(p^{e_1+e_2})\subseteq H_{e_1,e_2}(A_{e_1}).$$ As a consequence, 
 $$\vert \V_{\underline{I}}^{J_\bullet}(p^{e_1+e_2})\vert\leq |H_{e_1,e_2}(A_{e_1})| \leq p^{e_2t}|A_{e_1}|.$$	
Since $\frac{1}{p^{e_1}}\V_{\underline{I}}^{J_\bullet}(p^{e_1}) \subseteq A_{e_1}$, $$A_{e_1}=\frac{1}{p^{e_1}}\V_{\underline{I}}^{J_\bullet}(p^{e_1}) \cup \left( A_{e_1} \setminus \frac{1}{p^{e_1}}\V_{\underline{I}}^{J_\bullet}(p^{e_1}) \right).$$ Hence, $$|A_{e_1}|=\left(\vert \V_{\underline{I}}^{J_\bullet}(p^{e_1})\vert+ \left\vert A_{e_1} \setminus \frac{1}{p^{e_1}}\V_{\underline{I}}^{J_\bullet}(p^{e_1})\right\vert\right).$$
It follows that
	\[\vert \V_{\underline{I}}^{J_\bullet}(p^{e_1+e_2})\vert\leq p^{e_2t}\left(\vert \V_{\underline{I}}^{J_\bullet}(p^{e_1})\vert+ \left\vert A_{e_1} \setminus \frac{1}{p^{e_1}}\V_{\underline{I}}^{J_\bullet}(p^{e_1})\right\vert\right).\]
	Dividing by $p^{e_1t+e_2t}$, we obtain
	\[\frac{\vert \V_{\underline{I}}^{J_\bullet}(p^{e_1+e_2})\vert}{p^{e_1t+e_2t}}\leq \frac{\vert \V_{\underline{I}}^{J_\bullet}(p^{e_1})\vert}{p^{e_1t}}+ \frac{\vert A_{e_1} \setminus \frac{1}{p^{e_1t}}\V_{\underline{I}}^{J_\bullet}(p^{e_1})\vert}{p^{e_1t}}.\]
	Thus, we have
	\[\limsup_{e\to \infty}\frac{\vert \V_{\underline{I}}^{J_\bullet}(p^{e})\vert}{p^{et}}=\limsup_{e_2\to \infty}\frac{\vert \V_{\underline{I}}^{J_\bullet}(p^{e_1+e_2})\vert}{p^{e_1t+e_2t}}\leq \frac{\vert \V_{\underline{I}}^{J_\bullet}(p^{e_1})\vert}{p^{e_1t}}+ \frac{\vert A_{e_1} \setminus \frac{1}{p^{e_1t}}\V_{\underline{I}}^{J_\bullet}(p^{e_1})\vert}{p^{e_1t}}.\]
	It follows that 
	\[\limsup_{e\to \infty}\frac{\vert \V_{\underline{I}}^{J_\bullet}(p^{e})\vert}{p^{et}}\leq\liminf_{e_1\to \infty} \frac{\vert \V_{\underline{I}}^{J_\bullet}(p^{e_1})\vert}{p^{e_1t}}+ \lim_{e_1\to\infty}\frac{\vert A_{e_1} \setminus \frac{1}{p^{e_1t}}\V_{\underline{I}}^{J_\bullet}(p^{e_1})\vert}{p^{e_1t}}=\liminf_{e_1\to \infty} \frac{\vert \V_{\underline{I}}^{J_\bullet}(p^{e_1})\vert}{p^{e_1t}}.\]
	Therefore, the $\lim_{e\to \infty}\frac{\vert \V_{\underline{I}}^{J_\bullet}(p^{e})\vert}{p^{et}}$ exists.  
\end{proof}

Given Theorem \ref{ThmLimitExistsSec}, we are able to define the $F$-volume of a sequence of ideals with respect to a $p$-family. We justify the choice of this name in Section \ref{SecFpure}, where we show that this number gives a volume of certain regions for $F$-pure rings.

\begin{definition} \label{DefFVol}
Let $\underline{I}=I_1,\ldots, I_t\subseteq R$ be a sequence of ideals, and $J_\bullet=\{J_{p^e}\}_{e \in \NN}$ be a $p$-family of ideals in $R$ such that $I_1,\ldots, I_t\subseteq \sqrt{J_1}$. We define the $F$-volume of the sequence  $\underline{I}$ with respect to the $p$-family $J_\bullet=\{J_{p^e}\}_{e \in \NN}$ by 
$$
\Vol^{J_\bullet}_F(\underline{I})=\lim\limits_{e\to\infty} \frac{1}{p^{et}} |\V_{\underline{I}}^{J_\bullet}(p^e)|.
$$
If $\underline{f}=f_1,\ldots,f_t$ is a sequence of elements of $R$ such that $f_1,\ldots, f_t\in\sqrt{J_1}$, we use $\Vol^{J_\bullet}_F(\underline{f})$ to denote $\Vol^{J_\bullet}_F(\underline{I})$ where $\underline{I}=f_1R,\ldots, f_tR$. In case that the $p$-family is $J_\bullet=\{J^{[p^e]}\}_{e \in \NN}$ where $J$ is an ideal in $R$, $\Vol^{J_\bullet}_F(\underline{I})$ is denoted by $\Vol^J_F(\underline{I})$, and we call it the $F$-volume of the sequence $\underline{I}$ with respect to $J$. 
\end{definition}

We end this section providing an example that  shows that different generators of an ideal do not necessarily give equal volumes, that is, if we take two ideals $I,J$ such that $I\subseteq \sqrt{J}$, and $I =(f_1,\ldots,f_t)=(g_1,\ldots,g_s)$ with $\underline{f} \not= \underline{g}$, then it is possible to have $\Vol_F^J(\underline{f})\not = \Vol_F^J(\underline{g})$. 

\begin{example}\label{ex}
We take $R=K[[x,y]]$ with $K$ an $F$-finite field of characteristic $p=2$. Let $I=(x,y^2)=(x,y^2+x),\;\mathfrak{m}=(x,y),\;\underline{f}=x,y^2$ and $\underline{g}=x,y^2+x$. 

Let us compute $\Vol_F^\mathfrak{m}(\underline{f})$. We note that for $a,\;b, \;e \in \mathbb{N}$,
\begin{align*}
x^a y^{2b} \not\in \mathfrak{m}^{[p^e]}&\Leftrightarrow a \leq p^e-1,\;2b\leq p^e-1\\
&\Leftrightarrow a \leq p^e-1,\;b\leq \frac{p^e-1}{2} \\
&\Leftrightarrow a \leq p^e-1,\;b\leq \left\lfloor \frac{p^e-1}{2} \right\rfloor=\frac{p^e-2}{2}. 
\end{align*} 
Hence, $\V_{\underline{f}}^\mathfrak{m}(p^e)=[0,p^e-1]\times \left[0,\frac{p^e-2}{2}\right]\cap \mathbb{N}^2$. Thus, $|\V_{\underline{f}}^\mathfrak{m}(p^e)|=\frac{p^{2e}}{2}$. Therefore,
\begin{align*}
\Vol_F^\mathfrak{m}(\underline{f})&=\lim\limits_{e \rightarrow \infty}\frac{|\V_{\underline{f}}^\mathfrak{m}(p^e)|}{p^{2e}}\\
&= \lim\limits_{e \rightarrow \infty}\frac{p^{2e}}{2p^{2e}}\\
&=\frac{1}{2}.
\end{align*}

Let us compute $\Vol_F^\mathfrak{m}(\underline{g})$. We note that $\V_{\underline{f}}^\mathfrak{m}(p^e) \subseteq \V_{\underline{g}}^\mathfrak{m}(p^e)$. We show that 
\begin{align*}
\V_{\underline{g}}^\mathfrak{m}(p^e)=\V_{\underline{f}}^\mathfrak{m}(p^e) \cup\left( \left[0,\frac{p^e-2}{2}\right]\times \left( \frac{p^e-2}{2},p^e-1\right]\cap \mathbb{N}^2 \right).
\end{align*}
Since $a,\; b \leq p^e-1$ if $(a,b) \in \V_{\underline{g}}^\mathfrak{m}(p^e)$, it is enough to show that $(\frac{p^e-2}{2},p^e-1) \in \V_{\underline{g}}^\mathfrak{m}(p^e)$, and that $(\frac{p^e}{2},\frac{p^e}{2}) \not \in \V_{\underline{g}}^\mathfrak{m}(p^e)$. 

We have that 
\begin{align*}
x^{\frac{p^e-2}{2}}(x+y^2)^{p^e-1}&=\sum_{i=0}^{p^e-1}\binom{p^e-1}{i}x^{\frac{p^e-2}{2}+p^e-1-i}y^{2i}. 
\end{align*}
But, $2 \not | \binom{p^e-1}{\frac{p^e}{2}-1}$, then $\binom{p^e-1}{\frac{p^e}{2}-1}x^{p^e-1}y^{p^e-2} \not \in \mathfrak{m}^{[p^e]}$. Therefore, $(\frac{p^e-2}{2},p^e-1) \in \V_{\underline{g}}^\mathfrak{m}(p^e)$.

Moreover, $x^{\frac{p^e}{2}}(x+y^2)^{\frac{p^e}{2}}=x^{\frac{p^e}{2}}y^{p^e}+x^{p^e} \in \mathfrak{m}^{[p^e]}$. Therefore, $(\frac{p^e}{2},\frac{p^e}{2}) \not \in \V_{\underline{g}}^\mathfrak{m}(p^e)$.

It follows that 
\begin{align*}
|\V_{\underline{g}}^\mathfrak{m}(p^e)|&=|\V_{\underline{f}}^\mathfrak{m}(p^e)|+\frac{p^e}{2} \cdot \frac{p^e}{2} \\
&=\frac{p^{2e}}{2}+\frac{p^{2e}}{4}\\
&=\frac{3}{4}p^{2e}.
\end{align*}
Thus, we have
\begin{align*}
\Vol_F^\mathfrak{m}(\underline{g})&=\lim\limits_{e \rightarrow \infty}\frac{|\V_{\underline{g}}^\mathfrak{m}(p^e)|}{p^{2e}}\\
&= \lim\limits_{e \rightarrow \infty}\frac{3p^{2e}}{4p^{2e}}\\
&=\frac{3}{4}. 
\end{align*}      
\end{example}

\section{First properties}
In this section we discuss basic properties of the $F$-volumes. In particular, we focus on those properties
that resemble the properties of  the $F$-thresholds.

\begin{proposition}\label{PropBasicProp}
Let $\underline{I}=I_1,\ldots, I_t\subseteq R$ be a sequence of ideals, and $\mathfrak{a}, \;J\subseteq R$ be two ideals such that $I_1,\ldots, I_t \subseteq \sqrt{J}$. Then, the following statements hold. 
\begin{itemize}
\item[(1)] If $J \subseteq \mathfrak{a}$, then $\Vol^{\mathfrak{a}}_F(\underline{I})\leq\Vol^J_F(\underline{I})$;
\item[(2)] $\Vol^{J^{[p]}}_F(\underline{I}) =p^t\Vol^J_F(\underline{I})$.
\item[(3)] Let $\underline{I}_{t-1}=I_1,\ldots, I_{t-1}$. Then,  $\Vol^J_F(\underline{I}) \leq \Vol^J_F(\underline{I}_{t-1}) c^J (I_t)$.  
\end{itemize}
\end{proposition}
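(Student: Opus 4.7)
The plan is to derive all three parts directly from the definition of $\V_{\underline{I}}^J(p^e)$ and from Theorem \ref{ThmLimitExistsSec}, which guarantees that every limit we need to manipulate actually exists. For part (1), the observation is that $J\subseteq \mathfrak{a}$ forces $J^{[p^e]}\subseteq \mathfrak{a}^{[p^e]}$ for every $e$. Consequently, a monomial product $I_1^{a_1}\cdots I_t^{a_t}$ that fails to lie in $\mathfrak{a}^{[p^e]}$ automatically fails to lie in the smaller ideal $J^{[p^e]}$. This gives the set-inclusion $\V^{\mathfrak{a}}_{\underline{I}}(p^e)\subseteq \V^J_{\underline{I}}(p^e)$ for each $e$; dividing by $p^{et}$ and passing to the limit yields the desired inequality.

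For part (2), the argument is just a reindexing. Since $(J^{[p]})^{[p^e]}=J^{[p^{e+1}]}$, the definition gives $\V^{J^{[p]}}_{\underline{I}}(p^e)=\V^J_{\underline{I}}(p^{e+1})$. The ratio $|\V^{J^{[p]}}_{\underline{I}}(p^e)|/p^{et}$ equals $p^t\cdot |\V^J_{\underline{I}}(p^{e+1})|/p^{(e+1)t}$, so taking $e\to\infty$ produces the factor $p^t$ in front of $\Vol^J_F(\underline{I})$.

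For part (3), the key observation is that if $(a_1,\ldots,a_t)\in\V^J_{\underline{I}}(p^e)$, then in particular $I_1^{a_1}\cdots I_{t-1}^{a_{t-1}}\not\subseteq J^{[p^e]}$ (since enlarging a product by $I_t^{a_t}\subseteq R$ can only shrink it), and likewise $I_t^{a_t}\not\subseteq J^{[p^e]}$. Thus $(a_1,\ldots,a_{t-1})\in\V^J_{\underline{I}_{t-1}}(p^e)$ and $a_t\leq \nu^J_{I_t}(p^e)$, giving the cardinality bound $|\V^J_{\underline{I}}(p^e)|\leq |\V^J_{\underline{I}_{t-1}}(p^e)|\,(\nu^J_{I_t}(p^e)+1)$. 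Dividing through by $p^{et}=p^{e(t-1)}\cdot p^e$ and letting $e\to\infty$, the right-hand side converges to $\Vol^J_F(\underline{I}_{t-1})\cdot c^J(I_t)$, as desired.

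None of the three parts should present a genuine obstacle beyond the basic bookkeeping already described: the mildest subtlety is getting the inclusion direction right in (1) and the index shift correct in (2). In (3) the key is to extract, in parallel, both pieces of information carried by $I_1^{a_1}\cdots I_t^{a_t}\not\subseteq J^{[p^e]}$ — the one about the truncated product and the one about $I_t^{a_t}$ — so that the limit factors cleanly into $\Vol^J_F(\underline{I}_{t-1})$ and $c^J(I_t)$; the existence of the former limit is again supplied by Theorem \ref{ThmLimitExistsSec} and of the latter by the known convergence of the $F$-threshold sequence.
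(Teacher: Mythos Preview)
Your proposal is correct and follows essentially the same approach as the paper: the same set inclusion $\V_{\underline{I}}^{\mathfrak{a}}(p^e)\subseteq \V_{\underline{I}}^{J}(p^e)$ for (1), the same reindexing $\V_{\underline{I}}^{J^{[p]}}(p^e)=\V_{\underline{I}}^{J}(p^{e+1})$ for (2), and the same containment $\V_{\underline{I}}^{J}(p^e)\subseteq \V_{\underline{I}_{t-1}}^{J}(p^e)\times\{0,\ldots,\nu_{I_t}^J(p^e)\}$ for (3), followed by dividing and taking limits.
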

\begin{proof}$ $\\
\begin{itemize}
\item[(1)]  Since $J \subseteq \mathfrak{a}$, we have $\V_{\underline{I}}^\mathfrak{a}(p^e) \subseteq  \V_{\underline{I}}^J(p^e)$ for every $e \in \mathbb{N}$. Thus, $|\V_{\underline{I}}^\mathfrak{a}(p^e)| \leq |  \V_{\underline{I}}^J(p^e)|$. Therefore, $\Vol^\mathfrak{a}_F(\underline{I}) \leq \Vol^J_F(\underline{I})$.

\item[(2)]  We have that $(J^{[p]})^{[p^e]}=J^{[p^{e+1}]}$, then $ \V_{\underline{I}}^{J^{[p]}}(p^e)= \V_{\underline{I}}^J(p^{e+1})$. Hence $\frac{|\V_{\underline{I}}^{J^{[p]}}(p^e)|}{p^{et}}=\frac{p^t|\V_{\underline{I}}^J(p^{e+1})|}{p^{(e+1)t}}$. Therefore, 
$\Vol^{J^{[p]}}_F(\underline{I}) =p^t \Vol^J_F(\underline{I})$.

\item[(3)] Let $a \in \V_{\underline{I}}^J(p^e)$. Then, $\underline{I}^a  \nsubseteq J^{[p^e]}$. We have that $a_t \leq \nu^J_{I_t}(p^e)$; otherwise, $\underline{I}^a  \subseteq J^{[p^e]}$, which is a contradiction.
Then,  $ \V_{\underline{I}}^J(p^e)\subseteq  \V_{\underline{I}_{t-1}}^J(p^e)\times \{0,\ldots, \nu^J_{I_t}(p^e) \}$.  
Then,
\begin{align*}
\Vol^J_F(\underline{I})& =\lim\limits_{e\to\infty}\frac{|\V_{\underline{I}}^J(p^e)|}{p^{et}}	 \\
&\leq  \lim\limits_{e\to\infty}\frac{|\V_{\underline{I}_{t-1}}^J(p^e)\times   \{0,\ldots, \nu^J_{I_t}(p^e) \} |}{p^{et}}	\\
&=\lim\limits_{e\to\infty}\frac{|\V_{\underline{I}_{t-1}}^J(p^e)  |}{p^{e(t-1)}}	 
\lim\limits_{e\to\infty}\frac{ \nu^J_{I_t}(p^e)+1}{p^{e}}	 \\
&= \Vol^J_F(\underline{I}_{t-1}) c^J (I_t).
\end{align*}
\end{itemize}
\end{proof}

We now show that $F$-volumes are not affected by integral closure.

\begin{proposition}
Let $\underline{I}=I_1,\ldots, I_t\subseteq R$ be a sequence of ideals, and $J_\bullet=\{J_{p^e}\}_{e \in \NN}$ be a $p$-family of ideals in $R$ such that $I_1,\ldots, I_t\subseteq \sqrt{J_1}$. Then, $\Vol^{J_\bullet}_F(\underline{I})=\Vol^{J_\bullet}_F(\overline{I_1},I_2,\ldots,I_t)$, where $\overline{I_1}$ denotes the integral closure of $I_1$. As a consequence, $\Vol^{J_\bullet}_F(\underline{I})=\Vol^{J_\bullet}_F(\overline{I_1},\ldots,\overline{I_t})$.
\end{proposition}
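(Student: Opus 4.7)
The plan is to show that $\V_{\underline{I}}^{J_\bullet}(p^e)$ and $\V_{(\overline{I_1},I_2,\ldots,I_t)}^{J_\bullet}(p^e)$ differ by a set of cardinality $O(p^{e(t-1)})$, which becomes negligible after dividing by $p^{et}$. The easy half is immediate: $I_1\subseteq \overline{I_1}$ gives $\underline{I}^{a}\subseteq \overline{I_1}^{a_1}I_2^{a_2}\cdots I_t^{a_t}$ for every $a\in\NN^t$, hence $\V_{\underline{I}}^{J_\bullet}(p^e)\subseteq \V_{(\overline{I_1},I_2,\ldots,I_t)}^{J_\bullet}(p^e)$, and therefore $\Vol_F^{J_\bullet}(\underline{I})\leq \Vol_F^{J_\bullet}(\overline{I_1},I_2,\ldots,I_t)$ by Theorem \ref{ThmLimitExistsSec}.

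For the reverse inequality, I would invoke the standard integral closure estimate: there exists $c=c(I_1)\in\NN$ such that $\overline{I_1}^{\,n}\subseteq I_1^{n-c}$ for all $n\geq c$. (This is a consequence of the finiteness of the normalization of the Rees algebra $R[I_1t]$, which writes the high integral closures of $I_1^n$ in terms of ordinary powers of $I_1$.) Given this, define the shift
$$\sigma\colon\{a\in \V_{(\overline{I_1},I_2,\ldots,I_t)}^{J_\bullet}(p^e)\mid a_1\geq c\}\to \V_{\underline{I}}^{J_\bullet}(p^e),\qquad \sigma(a)=(a_1-c,a_2,\ldots,a_t).$$
It is well-defined because $I_1^{a_1-c}I_2^{a_2}\cdots I_t^{a_t}\supseteq \overline{I_1}^{\,a_1}I_2^{a_2}\cdots I_t^{a_t}\not\subseteq J_{p^e}$, and it is clearly injective. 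The complement of its domain (tuples with $a_1<c$) has cardinality $O(p^{e(t-1)})$ by the coordinate bounds from Remark \ref{finiteness} applied to the sequence $\overline{I_1},I_2,\ldots,I_t$. Hence $|\V_{(\overline{I_1},I_2,\ldots,I_t)}^{J_\bullet}(p^e)|\leq |\V_{\underline{I}}^{J_\bullet}(p^e)|+O(p^{e(t-1)})$; dividing by $p^{et}$ and letting $e\to\infty$ gives the reverse inequality.

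For the final assertion, note that the counting set $\V$ is symmetric in the pairs $(I_i,a_i)$, since it depends only on the monomial product $I_1^{a_1}\cdots I_t^{a_t}$. Consequently, the argument above applies with any index $i\in\{1,\ldots,t\}$ in place of $1$. Iterating successively for $i=1,\ldots,t$ replaces each $I_i$ by $\overline{I_i}$ one at a time without changing the $F$-volume, yielding $\Vol_F^{J_\bullet}(\underline{I})=\Vol_F^{J_\bullet}(\overline{I_1},\ldots,\overline{I_t})$.

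The only delicate input is the integral closure containment $\overline{I_1}^{\,n}\subseteq I_1^{n-c}$; this is routine in the $F$-singularities literature but may implicitly require a mild hypothesis on $R$ (such as analytic unramification or excellence). Everything else is a clean index shift combined with the polynomial coordinate bounds already established in Remark \ref{finiteness}.
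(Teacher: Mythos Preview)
Your proof is correct and follows essentially the same approach as the paper: both arguments use the reduction containment $\overline{I_1}^{\,n}\subseteq I_1^{\,n-\ell}$ to show that $|\V_{(\overline{I_1},I_2,\ldots,I_t)}^{J_\bullet}(p^e)|-|\V_{\underline{I}}^{J_\bullet}(p^e)|$ is $O(p^{e(t-1)})$, then divide by $p^{et}$ and pass to the limit. The only cosmetic difference is in the bookkeeping: the paper fixes $(a_2,\ldots,a_t)$ and shows each such column of the difference set $\V_{(\overline{I_1},\ldots)}^{J_\bullet}(p^e)\setminus\V_{\underline{I}}^{J_\bullet}(p^e)$ contains at most $\ell$ points, while you instead build an injection $a\mapsto(a_1-c,a_2,\ldots,a_t)$ from the part of $\V_{(\overline{I_1},\ldots)}^{J_\bullet}(p^e)$ with $a_1\geq c$ into $\V_{\underline{I}}^{J_\bullet}(p^e)$ and bound the small-$a_1$ complement. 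These are equivalent counts.

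One remark on your caveat: the containment $\overline{I_1}^{\,n}\subseteq I_1^{\,n-\ell}$ requires no extra hypothesis beyond Noetherianity. Since $\overline{I_1}$ is a finitely generated ideal integral over $I_1$, the determinant trick gives $\ell$ with $I_1\,\overline{I_1}^{\,\ell}=\overline{I_1}^{\,\ell+1}$, hence $\overline{I_1}^{\,n}=I_1^{\,n-\ell}\,\overline{I_1}^{\,\ell}\subseteq I_1^{\,n-\ell}$ for $n\geq\ell$; the paper invokes exactly this (``$I_1\subseteq\overline{I_1}$ is a reduction''). No excellence or analytic unramification is needed.
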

\begin{proof}
Since $I_1 \subseteq \overline{I_1}$ is a reduction, there exists $\ell \in \NN_{>0}$ such that $\overline{I_1}^n \subseteq I_1^{n- \ell}$ for every $n\in\NN$. We consider the set 
$$
H=\{\beta \in \NN^{t-1}\;|\; \exists \beta_1 \in \NN \; \hbox{such that} \; (\beta_1,\beta) \in \V_{\overline{I_1},I_2,\ldots,I_t}^{J_\bullet}(p^e) \backslash \V_{ \underline{I}}^{J_\bullet}(p^e) \}.
$$
For all $\beta \in H$, we denote $b_\beta$ as the largest nonnegative integer such that $$(b_\beta,\beta) \in \V_{\overline{I_1},I_2,\ldots,I_t}^{J_\bullet}(p^e) \backslash \V_{ \underline{I}}^{J_\bullet}(p^e).$$

We show that $\V_{\overline{I_1},I_2,\ldots,I_t}^{J_\bullet}(p^e) \backslash \V_{ \underline{I}}^{J_\bullet}(p^e) \subseteq \bigcup_{\beta \in H}(\NN \cap(b_\beta-\ell,b_\beta]) \times \{\beta\}$. Let $(a_1,\ldots,a_t)$ be an element of $\V_{\overline{I_1},I_2,\ldots,I_t}^{J_\bullet}(p^e) \backslash \V_{ \underline{I}}^{J_\bullet}(p^e)$. Thus, $a=(a_2,\ldots, a_t) \in H$. We have that $\overline{I_1}^{b_a}I_2^{a_2}\cdots I_t^{a_t} \subseteq I_1^{b_a-\ell}I_2^{a_2}\cdots I_t^{a_t}$. We deduce that $I_1^{b_a-\ell}I_2^{a_2}\cdots I_t^{a_t} \not \subseteq J_{p^e}$. But, $I_1^{a_1}I_2^{a_2}\cdots I_t^{a_t} \subseteq J_{p^e}$. As a consequence,  $b_a-\ell < a_1 \leq b_a$. Therefore, $(a_1,a) \in \bigcup_{\beta \in H}(\NN \cap (b_\beta-\ell,b_\beta]) \times \{\beta\}$.

We note that $\V_{ \underline{I}}^{J_\bullet}(p^e) \subseteq \V_{\overline{I_1},I_2,\ldots,I_t}^{J_\bullet}(p^e)$. Thus,     
\begin{align*}
|\V_{\overline{I_1},I_2,\ldots,I_t}^{J_\bullet}(p^e)|-|\V_{ \underline{I}}^{J_\bullet}(p^e)|=|\V_{\overline{I_1},I_2,\ldots,I_t}^{J_\bullet}(p^e) \backslash \V_{ \underline{I}}^{J_\bullet}(p^e)|\leq \ell|H| \leq   \ell \prod_{i=2}^t|\V_{I_i}^{J_\bullet}(p^e)|, 
\end{align*}
where the last inequality follows because $H \subseteq \V_{I_2,\ldots,I_t}^{J_\bullet}(p^e)$.
Since 
\begin{align*}
 \lim\limits_{e\to\infty}  \frac{\prod_{i=2}^t|\V_{I_i}^{J_\bullet}(p^e)|}{p^{e(t-1)}}= \prod_{i=2}^t\Vol^{J_\bullet}_F(I_i), 
\end{align*}
we obtain that
\begin{align*}
0\leq \lim\limits_{e\to\infty}\frac{ |\V_{\overline{I_1},I_2,\ldots,I_t}^{J_\bullet}(p^e)|}{p^{et}}-\frac{| \V_{ \underline{I}}^{J_\bullet}(p^e)|}{p^{et}}\leq 
\lim\limits_{e\to\infty} \frac{\ell \prod_{i=2}^t|\V_{I_i}^{J_\bullet}(p^e)|}{p^{et}}=0. 
\end{align*}
Therefore,
\begin{align*}
\Vol^{J_\bullet}_F(\underline{I})=\lim\limits_{e\to\infty}\frac{|\V_{ \underline{I}}^{J_\bullet}(p^e)|}{p^{et}}=\lim\limits_{e\to\infty}\frac{ |\V_{\overline{I_1},I_2,\ldots,I_t}^{J_\bullet}(p^e)|}{p^{et}}=\Vol^{J_\bullet}_F(\overline{I_1},I_2,\ldots,I_t).
\end{align*}
\end{proof}

We now start describing objects that give us an alternative description to $F$-volumes. These descriptions plays an important role in the following sections.

\begin{definition} 
Let $\underline{I}=I_1,\ldots, I_t\subseteq R$ be a sequence of ideals, and $J_\bullet=\{J_{p^e}\}_{e \in \NN}$ be a $p$-family of ideals in $R$ such that $I_1,\ldots, I_t\subseteq \sqrt{J_1}$. We take
\begin{align*}
B^{J_\bullet}(\underline{I};p^e)=\bigcup_{a\in \V_{\underline{I}}^{J_\bullet}(p^e)} [0,a_1/p^e] \times \ldots\times [0,a_t/p^e].
\end{align*}
If $\underline{f}=f_1,\ldots,f_t$ is a sequence of elements of $R$ such that $f_1,\ldots, f_t\in \sqrt{J_1}$, we use $B^{J_\bullet}(\underline{f};p^e)$ to denote $B^{J_\bullet}(\underline{I};p^e)$ where $\underline{I}=f_1R,\ldots, f_tR$. In case that the $p$-family is $J_\bullet=\{J^{[p^e]}\}_{e \in \NN}$ where $J$ is an ideal in $R$, $B^{J_\bullet}(\underline{I};p^e)$ is denoted by $B^J(\underline{I};p^e)$. 
\end{definition}

\begin{remark} \label{remarkanalogous}

Analogous to Definition \ref{def1}, we take
\begin{align*}
\widetilde{\V}_{\underline{I}}^{J_\bullet}(p^e)=\{(a_1,\ldots, a_t)\in\NN_{> 0}^t\; | \; I_1^{a_1}\cdots I_t^{a_t}\not\subseteq J_{p^e}\},
\end{align*}
and
\begin{align*}
\widetilde{B}^{J_\bullet}(\underline{I};p^e)=\bigcup_{a\in \widetilde{\V}_{\underline{I}}^{J_\bullet}(p^e)} [0,a_1/p^e] \times \ldots\times [0,a_n/p^e].
\end{align*}
If the $p$-family is $J_\bullet=\{J^{[p^e]}\}_{e \in \NN}$ with $J$ an ideal of $R$, we denote $\widetilde{\V}_{\underline{I}}^{J_\bullet}(p^e)$ and $\widetilde{B}^{J_\bullet}(\underline{I};p^e)$  by $\widetilde{\V}_{\underline{I}}^J(p^e)$ and $\widetilde{B}^J(\underline{I};p^e)$  respectively.   

We obtain
	\[\Vol(\widetilde{B}^{J_\bullet}(\underline{I};p^e))=\frac{1}{p^{et}}\vert \widetilde{\V}_{\underline{I}}^{J_\bullet}(p^e)\vert,\]
by 	dividing $\widetilde{B}^{J_\bullet}(\underline{I};p^e)$ in $t$-cubes of volume $1/p^{et}$ and counting the number of $t$-cubes.
	
\end{remark}

\begin{proposition}\label{PropOtherLim}
Let $\underline{I}=I_1,\ldots, I_t\subseteq R$ be a sequence of ideals, and $J_\bullet=\{J_{p^e}\}_{e \in \NN}$ be a $p$-family of ideals in $R$ such that $I_1,\ldots, I_t\subseteq \sqrt{J_1}$. Then,
$$
\Vol^{J_\bullet}_F(\underline{I})=\lim\limits_{e\to\infty}\frac{|\widetilde{\V}_{\underline{I}}^{J_\bullet}(p^e)|}{p^{et}}.
$$
\end{proposition}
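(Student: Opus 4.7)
The plan is to show that $\V_{\underline{I}}^{J_\bullet}(p^e)$ and $\widetilde{\V}_{\underline{I}}^{J_\bullet}(p^e)$ differ only by a set supported on the coordinate hyperplanes of $\NN^t$, and that this set is negligible after scaling by $p^{-et}$. Since the existence of $\Vol^{J_\bullet}_F(\underline{I})=\lim_{e\to\infty} |\V_{\underline{I}}^{J_\bullet}(p^e)|/p^{et}$ is already established in Theorem \ref{ThmLimitExistsSec}, this immediately identifies the two limits.

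First, I would observe the obvious inclusion $\widetilde{\V}_{\underline{I}}^{J_\bullet}(p^e)\subseteq \V_{\underline{I}}^{J_\bullet}(p^e)$, with complement
\[
\V_{\underline{I}}^{J_\bullet}(p^e)\setminus \widetilde{\V}_{\underline{I}}^{J_\bullet}(p^e)\;\subseteq\;\bigcup_{i=1}^{t}\{a\in \V_{\underline{I}}^{J_\bullet}(p^e)\;:\;a_i=0\}.
\]
Next, I would invoke Remark \ref{finiteness}: for each $n\in\{1,\ldots,t\}$ there exists $\ell_n$ with $I_n^{\ell_n}\subseteq J_1$, and therefore any $a=(a_1,\ldots,a_t)\in \V_{\underline{I}}^{J_\bullet}(p^e)$ satisfies $a_n<\mu(I_n)\ell_n p^e$. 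Consequently, for a fixed index $i$, the $i$-th ``face'' $\{a\in\V_{\underline{I}}^{J_\bullet}(p^e):a_i=0\}$ has cardinality at most $p^{e(t-1)}\prod_{j\neq i}\mu(I_j)\ell_j$.

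Summing over $i$ yields
\[
0\;\leq\;|\V_{\underline{I}}^{J_\bullet}(p^e)|-|\widetilde{\V}_{\underline{I}}^{J_\bullet}(p^e)|\;\leq\; p^{e(t-1)}\sum_{i=1}^{t}\prod_{j\neq i}\mu(I_j)\ell_j,
\]
and dividing by $p^{et}$ the right-hand side tends to $0$ as $e\to\infty$. Hence
\[
\lim_{e\to\infty}\frac{|\widetilde{\V}_{\underline{I}}^{J_\bullet}(p^e)|}{p^{et}}=\lim_{e\to\infty}\frac{|\V_{\underline{I}}^{J_\bullet}(p^e)|}{p^{et}}=\Vol^{J_\bullet}_F(\underline{I}).
\]
There is no real obstacle here; the only point requiring care is the uniform-in-$e$ face bound, which is immediate from the $\sqrt{J_1}$ hypothesis through Remark \ref{finiteness}. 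The argument is just a quantitative form of the intuition that the codimension-one boundary of the region $B^{J_\bullet}(\underline{I};p^e)$ has $t$-dimensional measure zero in the limit.
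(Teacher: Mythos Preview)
Your proof is correct and follows essentially the same approach as the paper: both bound the difference $\V_{\underline{I}}^{J_\bullet}(p^e)\setminus\widetilde{\V}_{\underline{I}}^{J_\bullet}(p^e)$ by the union of the coordinate faces and show this is $O(p^{e(t-1)})$. The only cosmetic difference is that the paper bounds each face using $\prod_{j\neq i}|\V_{I_j}^{J_\bullet}(p^e)|$ and then appeals to the convergence of $|\V_{I_j}^{J_\bullet}(p^e)|/p^e$, whereas you use the explicit constants $\mu(I_j)\ell_j$ from Remark~\ref{finiteness} directly; your version is arguably a bit cleaner.
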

\begin{proof}
We note that $\widetilde{\V}_{\underline{I}}^{J_\bullet}(p^e)\subseteq \V_{\underline{I}}^{J_\bullet}(p^e)$ and
\begin{align*}
\V_{\underline{I}}^{J_\bullet}(p^e)\setminus \widetilde{\V}_{\underline{I}}^{J_\bullet}(p^e)&=\{(a_1,\ldots, a_t)\in\NN^t\; | \; 
	I_1^{a_1} \cdots I_t^{a_t}\not\subseteq J_{p^e} \; \& \; \exists i \hbox{ such that }a_i=0\}\\
& \subseteq \{(a_1,\ldots, a_t)\in\NN^t\; | \; \forall i, \,
	a_i\leq |\V_{I_i}^{J_\bullet}(p^e)|-1    \; \& \; \exists i \hbox{ such that }a_i=0\}.\\  
\end{align*}
Then,
$$
|\V_{\underline{I}}^{J_\bullet}(p^e)|-|\widetilde{\V}_{\underline{I}}^{J_\bullet}(p^e)|=
|\V_{\underline{I}}^{J_\bullet}(p^e)\setminus \widetilde{\V}_{\underline{I}}^{J_\bullet}(p^e)| \leq  \sum^t_{i=1} \left( \prod_{i\neq j} |\V_{I_j}^{J_\bullet}(p^e)| \right). 
$$ 
We obtain that 
\begin{align*}
0\leq \lim\limits_{e\to\infty}\frac{ |\V_{\underline{I}}^{J_\bullet}(p^e)|}{p^{et}}-\frac{|\widetilde{\V}_{\underline{I}}^{J_\bullet}(p^e)|}{p^{et}} & \leq 
\lim\limits_{e\to\infty}  \frac{\sum^t_{i=1} \left( \prod_{i\neq j} |\V_{I_j}^{J_\bullet}(p^e)| \right)}{p^{et}} \\
&= \sum^t_{i=1} \lim\limits_{e\to\infty}  \frac{  \left( \prod_{i\neq j} |\V_{I_j}^{J_\bullet}(p^e)| \right)}{p^{et}}=0,  
\end{align*}
where the last equality follows from the fact that  
$$
  \lim\limits_{e\to\infty}  \frac{ \prod_{i\neq j} |\V_{I_j}^{J_\bullet}(p^e)|}{p^{e(t-1)}}= \prod_{i\neq j} \Vol^{J_\bullet}_F(I_j).  
$$
We conclude that 
$$
 \lim\limits_{e\to\infty}\frac{ |\V_{\underline{I}}^{J_\bullet}(p^e)|}{p^{et}}=\lim\limits_{e\to\infty}\frac{|\widetilde{\V}_{\underline{I}}^{J_\bullet}(p^e)|}{p^{et}}.
 $$
\end{proof}

We end this section with an upper bound for the $F$-volume in terms of the $F$-threshold.  
\begin{corollary}\label{CorFactorial}
Let $\underline{I}=I_1,\ldots, I_t\subseteq R$ be a sequence of ideals, and $J\subseteq R$ be an ideal such that $I_1,\ldots, I_t \subseteq \sqrt{J}$. Let $c=c^J(I_1+ \cdots +I_t)$. Then, $\Vol^J_F(\underline{I})\leq \frac{c^t}{t!}$. 
\end{corollary}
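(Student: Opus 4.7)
The plan is to bound $|\V_{\underline{I}}^J(p^e)|$ by a count of lattice points in a simplex of side length $\nu^J_{I_1+\cdots+I_t}(p^e)$, and then take the limit.

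First I would observe that by expanding via the multinomial (distributive) expansion,
\[
I_1^{a_1} \cdots I_t^{a_t} \;\subseteq\; (I_1 + \cdots + I_t)^{a_1 + \cdots + a_t},
\]
since every generator of the right-hand side is a sum of products, each of which is a product of elements drawn from the $I_i$'s in some distribution. Consequently, if $(a_1,\ldots,a_t) \in \V_{\underline{I}}^J(p^e)$, meaning $I_1^{a_1}\cdots I_t^{a_t} \not\subseteq J^{[p^e]}$, then $(I_1+\cdots+I_t)^{a_1+\cdots+a_t} \not\subseteq J^{[p^e]}$, and hence
\[
a_1 + \cdots + a_t \;\leq\; \nu_e := \nu^J_{I_1 + \cdots + I_t}(p^e).
\]

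Next I would count: the number of tuples $(a_1,\ldots,a_t) \in \NN^t$ satisfying $a_1+\cdots+a_t \leq \nu_e$ is $\binom{\nu_e + t}{t}$, by a standard stars-and-bars argument. Therefore
\[
|\V_{\underline{I}}^J(p^e)| \;\leq\; \binom{\nu_e + t}{t} \;=\; \frac{(\nu_e+t)(\nu_e+t-1)\cdots(\nu_e+1)}{t!}.
\]

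Finally, dividing by $p^{et}$ and taking the limit as $e \to \infty$,
\[
\Vol^J_F(\underline{I}) \;=\; \lim_{e\to\infty} \frac{|\V_{\underline{I}}^J(p^e)|}{p^{et}} \;\leq\; \frac{1}{t!} \lim_{e\to\infty} \prod_{i=1}^t \frac{\nu_e + i}{p^e} \;=\; \frac{c^t}{t!},
\]
since $\nu_e/p^e \to c^J(I_1+\cdots+I_t) = c$ by definition of the $F$-threshold. There is no essential obstacle here; the only minor subtlety is the containment $I_1^{a_1}\cdots I_t^{a_t} \subseteq (I_1+\cdots+I_t)^{a_1+\cdots+a_t}$, which reduces the multi-ideal problem to the single-ideal $F$-threshold of the sum.
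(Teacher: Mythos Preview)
Your proof is correct and rests on the same key observation as the paper: if $a\in \V_{\underline{I}}^J(p^e)$ then $a_1+\cdots+a_t\le \nu_e:=\nu^J_{I_1+\cdots+I_t}(p^e)$, so $\V_{\underline{I}}^J(p^e)$ sits inside a discrete simplex. The difference is only in how this simplex bound is cashed out. The paper passes to the continuous picture: it uses Remark~\ref{remarkanalogous} and Proposition~\ref{PropOtherLim} to write $\frac{1}{p^{et}}|\widetilde{\V}_{\underline{I}}^J(p^e)|=\Vol(\widetilde{B}^J(\underline{I};p^e))$, observes $\widetilde{B}^J(\underline{I};p^e)\subseteq\{x\in\RR_{\ge 0}^t:|x|\le \nu_e/p^e\}$, and bounds by the simplex volume $\frac{(\nu_e/p^e)^t}{t!}$ before taking the limit. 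You instead stay discrete, bounding the lattice-point count directly by $\binom{\nu_e+t}{t}$ via stars-and-bars and letting $e\to\infty$. Your route is slightly more elementary since it avoids the auxiliary sets $\widetilde{\V}$, $\widetilde{B}$ and the appeal to Proposition~\ref{PropOtherLim}; the paper's route has the advantage of reusing the $\widetilde{B}$-framework already developed and making the ``volume'' interpretation explicit at finite level.
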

\begin{proof}
Let $I=I_1+ \cdots +I_t$. For $\alpha, e\in \NN$, we have $I^\alpha\not\subseteq J^{[p^e]}$ if and only if there exists
$a=(a_1,\ldots, a_t)\in \NN^t$ with $a_1+\cdots+a_t=\alpha$ such that $\underline{I}^a\not\subseteq J^{[p^e]}$.
Hence, $\nu_I^J(p^e)=\max\{|a|\mid a\in \V_{\underline{I}}^J(p^e)\}$.
Additionally, for every $a \in \widetilde{B}^J(\underline{I};p^e)$ there exists $b\in V_{\underline{I}}^J(p^e)$ such that $|p^ea|\leq |b|$.
Consequently, we have that $\frac{\nu_I^J(p^e)}{p^e}\geq\max\{|a|\mid a\in \widetilde{B}^J(\underline{I};p^e)\}$.

Let $\nu(p^e)=\frac{\nu_I^J(p^e)}{p^e}$. 
We use $H(p^e)$ to denote the set $\{(x_1,\ldots,x_t)\in \mathbb{R}_{\geq 0}^t\;|\;x_1+\ldots+x_t \leq \nu(p^e)\}$. Then we have that $\widetilde{B}^J(\underline{I};p^e)\subseteq H(p^e)$. 
Thus, $\operatorname{Vol}(\widetilde{B}^J(\underline{I};p^e))\leq \Vol(H(p^e))=\frac{\nu(p^e)^t}{t!}$.     
As a consequence, we have that
$$
\frac{1}{p^{et}}\vert \widetilde{\V}_{\underline{I}}^J(p^e)\vert\leq \frac{\nu(p^e)^t}{t!}
$$ by Remark \ref{remarkanalogous}. Since $ \lim\limits_{e\to\infty}\nu(p^e)=c$, it follows that $\Vol^J_F(\underline{I})\leq \frac{c^t}{t!}$ by Proposition \ref{PropOtherLim}. 
\end{proof}

\section{Properties for $F$-pure rings}\label{SecFpure}
In this section we focus on $F$-pure rings. In particular, in Proposition \ref{PropositionMeasureB} we prove that the $F$-volume is in fact the volume of an object in a real space.  We also provide a few properties that hold only in this case.

\begin{proposition} \label{crece}
Let $\underline{I}=I_1,\ldots, I_t\subseteq R$ be a sequence of ideals, and $J\subseteq R$ be an ideal such that $I_1,\ldots, I_t\subseteq \sqrt{J}$. 
If $R$ is $F$-pure, then 
$$
B^J(\underline{I};p^e)\subseteq B^J(\underline{I};p^{e+1}).
$$
\end{proposition}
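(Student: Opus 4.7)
The plan is to show that for every lattice point $a \in \V_{\underline{I}}^J(p^e)$, the scaled point $pa = (pa_1,\ldots, pa_t)$ lies in $\V_{\underline{I}}^J(p^{e+1})$. Once this is done, the inclusion of regions follows immediately: the axis-aligned box $[0,a_1/p^e]\times\cdots\times[0,a_t/p^e]$ associated to $a$ is identical to $[0,pa_1/p^{e+1}]\times\cdots\times[0,pa_t/p^{e+1}]$, which by definition is contained in $B^J(\underline{I};p^{e+1})$. Taking the union over all $a \in \V_{\underline{I}}^J(p^e)$ then yields $B^J(\underline{I};p^e) \subseteq B^J(\underline{I};p^{e+1})$.

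To establish $pa \in \V_{\underline{I}}^J(p^{e+1})$, the starting point is the elementary observation
\[
(I_1^{a_1}\cdots I_t^{a_t})^{[p]} \;\subseteq\; I_1^{pa_1}\cdots I_t^{pa_t},
\]
valid in any characteristic $p$ ring, since if a generator has the form $(g_1\cdots g_t)^p$ with $g_i \in I_i^{a_i}$, then certainly $g_i^p \in I_i^{pa_i}$. The role of $F$-purity enters through its defining consequence: the Frobenius map $R \to R$ is injective on modules of the form $R/J^{[p^e]}$, so for any ideals $K,L \subseteq R$,
\[
K^{[p]} \subseteq L^{[p]} \;\Longrightarrow\; K \subseteq L.
\]
Equivalently, non-containment is preserved under Frobenius. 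Applying this to $K = I_1^{a_1}\cdots I_t^{a_t}$ and $L = J^{[p^e]}$, the hypothesis $a \in \V_{\underline{I}}^J(p^e)$ gives $K \not\subseteq L$, hence $K^{[p]} \not\subseteq L^{[p]} = J^{[p^{e+1}]}$.

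Combining these two facts, $I_1^{pa_1}\cdots I_t^{pa_t} \supseteq K^{[p]}$ contains an element not in $J^{[p^{e+1}]}$, so $I_1^{pa_1}\cdots I_t^{pa_t} \not\subseteq J^{[p^{e+1}]}$. This is exactly $pa \in \V_{\underline{I}}^J(p^{e+1})$, completing the argument. The only substantive step is invoking $F$-purity to push non-containment through Frobenius; once that is recognized, the proof reduces to matching up boxes under the identity $a_i/p^e = pa_i/p^{e+1}$. In particular, the argument does not need the more detailed combinatorial machinery from Section \ref{SecExistence}, only the product-of-Frobenius-powers containment and $F$-purity.
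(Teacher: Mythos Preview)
Your proof is correct and follows essentially the same approach as the paper: both show that $a\in \V_{\underline{I}}^J(p^e)$ implies $pa\in \V_{\underline{I}}^J(p^{e+1})$ by using $F$-purity to pass non-containment through Frobenius and the inclusion $(\underline{I}^a)^{[p]}\subseteq \underline{I}^{pa}$, then identify the box $[0,a_1/p^e]\times\cdots\times[0,a_t/p^e]$ with one indexed by $pa$ at level $e+1$. Your write-up simply spells out in more detail why $F$-purity yields the implication $K\not\subseteq L\Rightarrow K^{[p]}\not\subseteq L^{[p]}$.
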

\begin{proof}
For every element $a$ in $\V_{\underline{I}}^J(p^e)$, we have that $\underline{I}^a \not \subseteq J^{[p^e]}$. Since $R$ is an $F$-pure ring, $(\underline{I}^a)^{[p]}\not \subseteq J^{[p^{e+1}]}$. As a consequence, $\underline{I}^{pa} \not \subseteq J^{[p^{e+1}]}$, thus $pa \in \V_{\underline{I}}^J(p^{e+1})$.

In addition, we have that
\begin{align*}
\left[0,\frac{a_1}{p^e}\right] \times \cdots \times \left[0,\frac{a_t}{p^e}\right] = \left[0,\frac{pa_1}{p^{e+1}}\right]\times \cdots \times \left[0,\frac{pa_t}{p^{e+1}}\right].
\end{align*}
Therefore,
\begin{align*}
B^J(\underline{I};p^e)\subseteq B^J(\underline{I};p^{e+1}).
\end{align*}       
\end{proof}

\begin{remark}\label{remarkcrece}
Taking the same condition of Proposition \ref{crece}, we have that $$\widetilde{B}^J(\underline{I};p^e)\subseteq \widetilde{B}^J(\underline{I};p^{e+1}).$$ 
\end{remark}

\begin{definition} 
Suppose that $R$ is an $F$-pure ring. Let $\underline{I}=I_1,\ldots, I_t\subseteq R$ be a sequence of ideals, and $J_\bullet=\{J_{p^e}\}_{e \in \NN}$ be a $p$-family of ideals in $R$ such that $I_1,\ldots, I_t\subseteq \sqrt{J_1}$. We take
$$
B^{J_\bullet}(\underline{I})= \bigcup_{e\in\NN} B^{J_\bullet}(\underline{I};p^e).
$$
If $\underline{f}=f_1,\ldots,f_t$ is a sequence of elements of $R$ such that $f_1,\ldots, f_t\in \sqrt{J_1}$, we use $B^{J_\bullet}(\underline{f})$ to denote $B^{J_\bullet}(\underline{I})$ where $\underline{I}=f_1R,\ldots, f_tR$. In case that the $p$-family is $J_\bullet=\{J^{[p^e]}\}_{e \in \NN}$ where $J$ is an ideal in $R$, $B^{J_\bullet}(\underline{I})$ is denoted by $B^J(\underline{I})$. 
\end{definition}

Suppose that $(R,\m,K)$ is an $F$-finite regular local ring.
Let $\underline{I}=I_1,\ldots, I_t\subseteq R$ be a sequence of ideals. 
The mixed test  ideals  $\tau(I^{a_1}_1 \cdots  I^{a_t}_t)$
are important objects studied in birational geometry \cite{HY2003,BMS-MMJ,FelipeConstReg}.
The set $B^\m(\underline{I})$ is the first constancy region for these ideals \cite{FelipeConstReg}.
Furthermore, $B^J(\underline{I})$ is the union of the constancy regions whose test ideal is not contained in $J$.

If the ring is not regular, then $B^\m(\underline{I})$ is no longer a constancy region. To see this, it suffices to look at the case where $t=1$, and take any example where the $F$-threshold $c^{\m}(\m)\neq \hbox{fpt}(\m)$  \cite{MOY}.

\begin{remark}\label{RemC}
Suppose that $R$ is an $F$-pure ring. Let $\underline{I}=I_1,\ldots, I_t\subseteq R$ be a sequence of  ideals, and $J \subseteq R$ be a proper ideal such that $I_1,\ldots, I_t\subseteq\sqrt{J}$. 
We  have that 
\begin{align*}
B^J(\underline{I})=\left(\bigcup_{e\in\NN} \widetilde{B}^J(\underline{I};p^e) \right)\bigcup C,
\end{align*}
where $C=\bigcup_{e\in \NN} \bigcup_{a\in \V_{\underline{I}}^{J_\bullet}(p^e)\setminus \widetilde{\V}_{\underline{I}}^{J_\bullet}(p^e)}  [0,a_1/p^e] \times \ldots\times [0,a_t/p^e].$ We note that $C$ has measure zero as subset of $\mathbb{R}^t$.
\end{remark}

The following result justifies in part the name of $F$-volume.

\begin{proposition}\label{PropositionMeasureB}
Suppose that $R$ is an $F$-pure ring.
Let $\underline{I}=I_1,\ldots,I_t\subseteq R$ be a sequence of ideals, and $J\subseteq R$ be an ideal such that $I_1,\ldots, I_t\subseteq  \sqrt{J}$. 
Then, $B^J(\underline{I})$ is a measurable set. Furthermore,
$$
\Vol(B^J(\underline{I}))=\lim\limits_{e\to\infty} \frac{1}{p^{et}} |\widetilde{\V}_{\underline{I}}^J(p^e)|.
$$
In particular, 
$$
\Vol^J_F(\underline{I})=\Vol(B^J(\underline{I})).
$$
\end{proposition}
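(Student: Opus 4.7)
The plan is to realize $B^J(\underline{I})$ as an increasing countable union of elementary measurable sets and apply continuity of Lebesgue measure from below, then transport the resulting volume formula to $\Vol^J_F(\underline{I})$ via results already established.

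For each $e \in \NN$, the set $\widetilde{\V}_{\underline{I}}^J(p^e)$ is finite (Remark \ref{finiteness} bounds its coordinates by $\mu(I_n)\ell_n p^e$), so $\widetilde{B}^J(\underline{I}; p^e)$ is a finite union of axis-aligned boxes with rational vertices. In particular each $\widetilde{B}^J(\underline{I}; p^e)$ is Lebesgue measurable, and by the counting argument in Remark \ref{remarkanalogous},
$$\Vol\bigl(\widetilde{B}^J(\underline{I}; p^e)\bigr) = \frac{1}{p^{et}}\,\bigl|\widetilde{\V}_{\underline{I}}^J(p^e)\bigr|.$$
The remark immediately preceding the proposition identifies
$$B^J(\underline{I}) = \bigcup_{e \in \NN} \widetilde{B}^J(\underline{I}; p^e),$$
displaying $B^J(\underline{I})$ as a countable union of measurable sets, hence measurable. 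The uniform bound from Remark \ref{finiteness} further shows that this union sits inside the bounded box $\prod_{n=1}^{t}[0, \mu(I_n)\ell_n]$, so $\Vol(B^J(\underline{I}))$ is finite.

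The $F$-purity hypothesis enters through Remark \ref{remarkcrece}, which asserts that the chain $\{\widetilde{B}^J(\underline{I}; p^e)\}_{e\in\NN}$ is nested increasing. Continuity of Lebesgue measure from below therefore yields
$$\Vol\bigl(B^J(\underline{I})\bigr) = \lim_{e\to\infty} \Vol\bigl(\widetilde{B}^J(\underline{I}; p^e)\bigr) = \lim_{e\to\infty} \frac{1}{p^{et}}\,\bigl|\widetilde{\V}_{\underline{I}}^J(p^e)\bigr|,$$
proving the first displayed equality. Combining this with Proposition \ref{PropOtherLim}, which identifies the right-hand side with $\Vol^J_F(\underline{I})$, delivers the final equality $\Vol^J_F(\underline{I}) = \Vol(B^J(\underline{I}))$.

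Essentially no genuine obstacle arises: the technical content has already been packaged into the preceding remarks, namely the monotonicity of the $\widetilde{B}$-chain via $F$-purity (Proposition \ref{crece} and Remark \ref{remarkcrece}), the volume-counting identity (Remark \ref{remarkanalogous}), and the passage from $\V$ to $\widetilde{\V}$ at the level of limits (Proposition \ref{PropOtherLim}). The only care needed is to invoke the correct version ($\widetilde{B}$ rather than $B$) so that each piece is a full-dimensional union of boxes and the nested-union limit computation applies without worrying about lower-dimensional boundary pieces contributing no measure.
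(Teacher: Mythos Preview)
Your proof is correct and follows essentially the same approach as the paper: both arguments use that $B^J(\underline{I})=\bigcup_e \widetilde{B}^J(\underline{I};p^e)$ is a countable union of measurable sets, invoke the volume identity from Remark \ref{remarkanalogous}, and apply monotone continuity of measure via the nesting from Remark \ref{remarkcrece}. You are simply more explicit than the paper about finiteness, boundedness, and the role of Proposition \ref{PropOtherLim} in the final identification.
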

\begin{proof}
Since  $ \widetilde{B}^J(\underline{I};p^e)$ is measurable for every $e\in \NN$, we  conclude that $\widetilde{B}^J(\underline{I})=\bigcup_{e\in\NN} \widetilde{B}^J(\underline{I};p^e)$ is also measurable.
Similarly, $B^J(\underline{I})$ is measurable.

From Remark \ref{remarkanalogous}, we recall that
$$
\Vol(\widetilde{B}^J(\underline{I};p^e)) =\frac{1}{p^{et}}\vert \widetilde{\V}_{\underline{I}}^J(p^e)\vert.
$$	
	Since $\widetilde{B}^J(\underline{I};p^e)\subseteq \widetilde{B}^J(\underline{I};p^{e+1})$ for every $e\in \NN$, we conclude that 
	$$
\Vol(\widetilde{B}^J(\underline{I}))=\lim\limits_{e\to\infty} \frac{1}{p^{et}} |\widetilde{\V}_{\underline{I}}^J(p^e)|=\Vol^J_F(\underline{I}),
$$
where the last equality follows from Proposition  \ref{PropOtherLim}.
Hence,
$$
\Vol(B^J(\underline{I}))=\Vol(\widetilde{B}^J(\underline{I})\cup C)
=\Vol(\widetilde{B}^J(\underline{I}))=\Vol^J_F(\underline{I})
$$
by Remark \ref{RemC}.
\end{proof}

\begin{remark} \label{remarkincreasing}
Suppose that $R$ is an $F$-pure ring. Let $\underline{I}=I_1,\ldots, I_t\subseteq R$ be a sequence of ideals, and let $J$ be an ideal in $R$ such that $I_1,\ldots, I_t\subseteq \sqrt{J}$. From Remarks \ref{remarkanalogous} and \ref{remarkcrece}, we have that the sequence $\left\{ \frac{|\widetilde{\V}_{\underline{I}}^J(p^e)|}{p^{et}} \right\}_{e\  \in \NN}$ is increasing. 
\end{remark}

\begin{lemma}\label{remarkbound}
	Let $\underline{I}=I_1,\ldots, I_t\subseteq R$ be a sequence of ideals, and $J_\bullet=\{J_{p^e}\}_{e \in \NN}$ be a $p$-family of ideals in $R$ such that $I_1,\ldots, I_t\subseteq \sqrt{J_1}$. Then, there exists $u \in \RR_{\geq 0}$ such that $$\frac{|\widetilde{\V}_{\underline{I}}^{J_{p^e}}(p^{e_1+e_2})|}{p^{(e_1+e_2)t}}\leq\frac{|\widetilde{\V}_{\underline{I}}^{J_{p^e}}(p^{e_1})|}{p^{e_1t}}+\frac{p^{e(t-1)}u}{p^{e_1}}$$ for every $e,e_1,e_2 \in \NN$.
\end{lemma} 
\begin{proof}
	For each $e\in \NN$ and $n\in\{1,...,t\}$, let $\ell_{e,n}=\min\{\ell\mid I_n^{\ell}\subseteq J_{p^e}\}$ and consider the sets
	\begin{itemize}
	 \item$\mathcal{B}(\underline{I})_{e_1}^e=\frac{1}{p^{e_1}}\NN^t\cap\left(\bigcup_{j=1}^t\left(\prod_{i=1}^{j-1}[0,\mu(I_i)\ell_{e,i}]\times \{0\}\times\prod_{i=j+1}^t[0,\mu(I_i)\ell_{e,i}]\right)\right)$
	\item $\mathcal{L}_{e_1,e_2}^e=H_{e_1, e_2}\left(  \bigcup^\mu_{j=0} \left(\partial \left(\frac{1}{p^{e_1}}\V_{\underline{I}}^{J_{p^e}}(p^{e_1})\cup \mathcal{B}(\underline{I})_{e_1}^e\right)+\frac{j}{p^{e_1}}\mathbf{1}\right)\right)$
	\end{itemize}
	where $\mu=\max\{\mu(I_1),\ldots,\mu(I_t)\}$.
	
	We claim that
 $$
	\frac{1}{p^{e_1+e_2}}\widetilde{\V}_{\underline{I}}^{J_{p^e}}(p^{e_1+e_2})\subseteq H_{e_1,e_2}\left(\frac{1}{p^{e_1}}\widetilde{\V}_{\underline{I}}^{J_{p^e}}(p^{e_1})\right)\cup\mathcal{L}_{e_1,e_2}^e.$$
Let $x\in\frac{1}{p^{e_1+e_2}}\widetilde{\V}_{\underline{I}}^{J_\bullet}(p^{e_1+e_2})$. Suppose that $x\not\in \mathcal{L}_{e_1,e_2}^e.$ From Lemma \ref{lem2}, we have that
$$
	\frac{1}{p^{e_1+e_2}}\V_{\underline{I}}^{J_{p^e}}(p^{e_1+e_2})\subseteq H_{e_1,e_2}\left(\frac{1}{p^{e_1}}\V_{\underline{I}}^{J_{p^e}}(p^{e_1})\right)\cup\mathcal{L}_{e_1,e_2}^e.$$
	We note that $\widetilde{\V}_{\underline{I}}^{J_{p^e}}(p^{e_1+e_2}) \subseteq \V_{\underline{I}}^{J_{p^e}}(p^{e_1+e_2})$. Since $x\not\in \mathcal{L}_{e_1,e_2}^e$, $x \in H_{e_1,e_2}\left(\frac{1}{p^{e_1}}\V_{\underline{I}}^{J_{p^e}}(p^{e_1})\right)$. Then, there exists $y\in \frac{1}{p^{e_1}}\V_{\underline{I}}^{J_{p^e}}(p^{e_1})$ such that 
	$y_i-\frac{1}{p^{e_1}}<x_i\leq y_i$
	for every $i$. 
	Since $x_i>0$ for every $i$, $y_i>0$ for every $i$. 
	Hence $y\in \frac{1}{p^{e_1}}\widetilde{\V}_{\underline{I}}^{J_{p^e}}(p^{e_1})$  and
	$x\in H_{e_1,e_2}\left(\frac{1}{p^{e_1}}\widetilde{\V}_{\underline{I}}^{J_{p^e}}(p^{e_1})\right)$.
	Therefore,
	\[
	\frac{1}{p^{e_1+e_2}}\widetilde{\V}_{\underline{I}}^{J_{p^e}}(p^{e_1+e_2})\subseteq H_{e_1,e_2}\left(\frac{1}{p^{e_1}}\widetilde{\V}_{\underline{I}}^{J_{p^e}}(p^{e_1})\right)\cup\mathcal{L}_{e_1,e_2}^e.\]
	Consequently, we have that 
	$$\frac{|\widetilde{\V}_{\underline{I}}^{J_{p^e}}(p^{e_1+e_2})|}{p^{(e_1+e_2)t}}\leq\frac{|\widetilde{\V}_{\underline{I}}^{J_{p^e}}(p^{e_1})|}{p^{e_1t}}+\frac{(\mu+1)\sum_{n=1}^t\left(\prod_{j=1}^{n-1}(\mu(I_j)\ell_{e,j}+1)\prod_{j=n+1}^t(\mu(I_j)\ell_{e, j}+1)\right)}{p^{e_1}}.$$
	
	On the other hand, since $I_n^{\mu(I_n)\ell_{0,n}p^e}\subseteq J_1^{[p^e]} \subseteq J_{p^e}$, we have that $\ell_{e,n}\leq\mu(I_n)\ell_{0,n}p^e$. Thus, if 
	$u=(\mu+1)\sum_{n=1}^t\left(\prod_{j=1}^{n-1}(\mu(I_j)^2\ell_{0,j}+1)\prod_{j=n+1}^t(\mu(I_j)^2\ell_{0,j}+1)\right)$, we obtain
	$$\frac{|\widetilde{\V}_{\underline{I}}^{J_{p^e}}(p^{e_1+e_2})|}{p^{(e_1+e_2)t}}\leq\frac{|\widetilde{\V}_{\underline{I}}^{J_{p^e}}(p^{e_1})|}{p^{e_1t}}+\frac{p^{e(t-1)}u}{p^{e_1}}.$$
	
\end{proof}

We now introduce another basic property for $F$-volumes for $F$-pure rings.

\begin{proposition}
Suppose that $R$ is an $F$-pure ring. Let $\underline{I}=I_1,\ldots, I_t\subseteq R$ be a sequence of ideals, and $J_\bullet=\{J_{p^e}\}_{e \in \NN}$ be a $p$-family of ideals in $R$ such that $I_1,\ldots, I_t\subseteq \sqrt{J_1}$. Then, 
$$\Vol^{J_\bullet}_F(\underline{I})=\lim\limits_{e\to\infty} \frac{\Vol^{J_{p^e}}_F(\underline{I})}{p^{et}}.$$
\end{proposition}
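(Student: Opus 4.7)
The plan is to squeeze the sequence $\sigma_e := \Vol^{J_{p^e}}_F(\underline{I})/p^{et}$ between two bounds that both converge to $\Vol^{J_\bullet}_F(\underline{I})$.

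First I would establish the lower bound $\Vol^{J_\bullet}_F(\underline{I}) \leq \sigma_e$ for every $e$. Iterating the defining condition $J_{p^k}^{[p]} \subseteq J_{p^{k+1}}$ yields $(J_{p^e})^{[p^{e'}]} \subseteq J_{p^{e+e'}}$ for all $e,e' \geq 0$. Hence $\underline{I}^a \not\subseteq J_{p^{e+e'}}$ forces $\underline{I}^a \not\subseteq (J_{p^e})^{[p^{e'}]}$, giving
\[
\V^{J_\bullet}_{\underline{I}}(p^{e+e'}) \subseteq \V^{J_{p^e}}_{\underline{I}}(p^{e'}).
\]
Taking cardinalities, dividing by $p^{(e+e')t} = p^{et}p^{e't}$, and sending $e' \to \infty$ produces $\Vol^{J_\bullet}_F(\underline{I}) \leq \sigma_e$.

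For the matching upper bound I would specialize Remark \ref{remarkbound} to $e_1 = 0$. Since $(J_{p^e})^{[p^0]} = J_{p^e}$, we have $\widetilde{\V}^{J_{p^e}}_{\underline{I}}(1) = \widetilde{\V}^{J_\bullet}_{\underline{I}}(p^e)$, so the remark provides
\[
\frac{|\widetilde{\V}^{J_{p^e}}_{\underline{I}}(p^{e_2})|}{p^{e_2 t}} \leq |\widetilde{\V}^{J_\bullet}_{\underline{I}}(p^e)| + p^{e(t-1)}\,u,
\]
with $u$ a constant independent of $e$ and $e_2$. Letting $e_2 \to \infty$ and invoking Proposition \ref{PropOtherLim} yields $\Vol^{J_{p^e}}_F(\underline{I}) \leq |\widetilde{\V}^{J_\bullet}_{\underline{I}}(p^e)| + p^{e(t-1)}u$. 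Dividing by $p^{et}$ gives
\[
\sigma_e \leq \frac{|\widetilde{\V}^{J_\bullet}_{\underline{I}}(p^e)|}{p^{et}} + \frac{u}{p^e}.
\]
As $e \to \infty$, Proposition \ref{PropOtherLim} forces the first summand to approach $\Vol^{J_\bullet}_F(\underline{I})$ while $u/p^e \to 0$, so $\limsup_e \sigma_e \leq \Vol^{J_\bullet}_F(\underline{I})$. Combined with the lower bound, this gives $\lim_e \sigma_e = \Vol^{J_\bullet}_F(\underline{I})$, as asserted.

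The delicate point is the apparent blow-up $p^{e(t-1)} u$ in the error term of Remark \ref{remarkbound}, which comes from the fact that the containment $I_n^{\ell} \subseteq J_{p^e}$ requires $\ell$ of order $p^e$ in general. This growth is exactly offset by the normalizing denominator $p^{et}$ of $\sigma_e$, leaving an error of order $1/p^e$ that vanishes in the limit; once this is handled, the proof reduces to tracking cardinalities through the two inclusions above.
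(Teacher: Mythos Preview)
Your argument is correct and follows the same overall squeeze strategy as the paper: both obtain the upper bound on $\sigma_e$ by specializing Remark~\ref{remarkbound} to $e_1=0$, using the identity $\widetilde{\V}_{\underline{I}}^{J_{p^e}}(p^0)=\widetilde{\V}_{\underline{I}}^{J_\bullet}(p^e)$ and then dividing the resulting error $p^{e(t-1)}u$ by $p^{et}$. The one genuine difference is in the lower bound. The paper first shows $\{\sigma_e\}$ is decreasing (from $J_{p^e}^{[p]}\subseteq J_{p^{e+1}}$ and Proposition~\ref{PropBasicProp}) to guarantee convergence, and then invokes $F$-purity via Remark~\ref{remarkincreasing} to get $|\widetilde{\V}_{\underline{I}}^{J_{p^e}}(p^0)|\leq |\widetilde{\V}_{\underline{I}}^{J_{p^e}}(p^s)|/p^{st}$, which after the limit in $s$ yields $|\widetilde{\V}_{\underline{I}}^{J_\bullet}(p^e)|/p^{et}\leq \sigma_e$. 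You instead use the iterated $p$-family containment $(J_{p^e})^{[p^{e'}]}\subseteq J_{p^{e+e'}}$ to obtain $\V_{\underline{I}}^{J_\bullet}(p^{e+e'})\subseteq \V_{\underline{I}}^{J_{p^e}}(p^{e'})$ and hence $\Vol_F^{J_\bullet}(\underline{I})\leq \sigma_e$ directly. Your route is slightly more economical: it bypasses the separate convergence step and, notably, never uses the $F$-pure hypothesis, so your argument in fact proves the statement for arbitrary Noetherian rings of prime characteristic.
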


\begin{proof}

For every $e \in \NN$ we have $J_{p^e}^{[p]} \subseteq J_{p^{e+1}}$. Thus, $\Vol^{J_{p^{e+1}}}_F(\underline{I}) \leq \Vol^{J_{p^e}^{[p]}}_F(\underline{I})=p^t \cdot \Vol^{J_{p^e}}_F(\underline{I})$. Hence, $$0 \leq \frac{\Vol^{J_{p^{e+1}}}_F(\underline{I})}{p^{(e+1)t}} \leq \frac{\Vol^{J_{p^e}}_F(\underline{I})}{p^{et}},$$ 
which shows the sequence $\left\{ \frac{\Vol^{J_{p^e}}_F(\underline{I})}{p^{et}} \right\}_{e\  \in \NN}$ is decreasing, and bounded below by zero. As a consequence, it converges to a limit as $e$ approaches infinity.

Note that, for every nonnegative integer $e$, we have that 
\begin{align*}
\widetilde{\V}_{\underline{I}}^{J_\bullet}(p^e)&=\{(a_1,\ldots,a_t)\in \NN_{>0}^t\;|\; I_1^{a_1}\cdots I_t^{a_t} \not \subseteq J_{p^e}\}\\
&=\{(a_1,\ldots,a_t)\in \NN_{>0}^t\;|\; I_1^{a_1}\cdots I_t^{a_t} \not \subseteq J_{p^e}^{[p^0]}\}\\
&=\widetilde{\V}_{\underline{I}}^{J_{p^e}}(p^0).
\end{align*}
By Lemma \ref{remarkbound}, there exists $u \in \RR_{\geq0}$ (that does not depend on $e$) such that for every nonnegative integer $s$, we have
\begin{align*}
\frac{|\widetilde{\V}_{\underline{I}}^{J_{p^e}}(p^s)|}{p^{st}}-\frac{|\widetilde{\V}_{\underline{I}}^{J_{p^e}}(p^0)|}{p^{0t}} \leq \frac{p^{e(t-1)}u}{p^{0}}.
\end{align*}

Since $R$ is a $F$-pure ring, the sequence $\left\{ \frac{|\widetilde{\V}_{\underline{I}}^J(p^s)|}{p^{st}} \right\}_{s \geq 0}$ is increasing by Remark \ref{remarkincreasing}. As a consequence,    
\begin{align*}
0 \leq \frac{|\widetilde{\V}_{\underline{I}}^{J_{p^e}}(p^s)|}{p^{st}}-|\widetilde{\V}_{\underline{I}}^{J_{p^e}}(p^0)|\leq p^{e(t-1)}u.
\end{align*}
Thus,
\begin{align*}
0 \leq \frac{|\widetilde{\V}_{\underline{I}}^{J_{p^e}}(p^s)|}{p^{st}}-|\widetilde{\V}_{\underline{I}}^{J_\bullet}(p^e)|\leq p^{e(t-1)}u.
\end{align*}
We take limit over $s$ to get
\begin{align*}
0 \leq \Vol^{J_{p^e}}_F(\underline{I})-|\widetilde{\V}_{\underline{I}}^{J_\bullet}(p^e)| \leq p^{e(t-1)}u,
\end{align*}
dividing by $p^{et}$ gives
\begin{align*}
0 \leq \frac{\Vol^{J_{p^e}}_F(\underline{I})}{p^{et}}-\frac{|\widetilde{\V}_{\underline{I}}^{J_\bullet}(p^e)|}{p^{et}} \leq \frac{u}{p^{e}}.
\end{align*}
Taking limit over $e$ we conclude that
\begin{align*}
 \lim\limits_{e\to\infty} \frac{\Vol^{J_{p^e}}_F(\underline{I})}{p^{et}} =\Vol^{J_\bullet}_F(\underline{I}).
\end{align*}

\end{proof}

\begin{proposition}
Suppose that $R$ is an $F$-finite regular ring.
Let $\underline{I}=I_1,\ldots, I_t$ be a sequence of ideals in $R$, $J$ be an ideal of $R$, and $\{J_i\}_i$ be a family of ideals such that $J=\bigcap_iJ_i$ and $I_1,\ldots, I_t \subseteq\sqrt{J}$. Then, $B^J(\underline{I})=\bigcup_iB^{J_i}(\underline{I})$. In particular, $\Vol(B^J(\underline{I}))=\Vol(\bigcup_iB^{J_i}(\underline{I}))$.
\end{proposition}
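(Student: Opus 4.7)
The plan is to prove the stronger \emph{set} equality
\[
B^J(\underline{I}) \;=\; \bigcup_i B^{J_i}(\underline{I}),
\]
from which equality of volumes follows at once. The containment $\bigcup_i B^{J_i}(\underline{I}) \subseteq B^J(\underline{I})$ is essentially formal: since $J\subseteq J_i$ we have $J^{[p^e]}\subseteq J_i^{[p^e]}$, hence $\V_{\underline{I}}^{J_i}(p^e)\subseteq \V_{\underline{I}}^J(p^e)$ and therefore $B^{J_i}(\underline{I};p^e)\subseteq B^J(\underline{I};p^e)$ at every level $e$, which persists after taking unions in $e$ and $i$.

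The reverse containment reduces to the algebraic identity
\[
J^{[p^e]} \;=\; \bigcap_i J_i^{[p^e]}
\]
for every $e$, valid whenever $J=\bigcap_i J_i$ in an $F$-finite regular ring. Once this is in hand, $\underline{I}^a\not\subseteq J^{[p^e]}$ immediately forces $\underline{I}^a\not\subseteq J_i^{[p^e]}$ for some $i$, so $\V_{\underline{I}}^J(p^e)=\bigcup_i \V_{\underline{I}}^{J_i}(p^e)$ and hence $B^J(\underline{I};p^e)=\bigcup_i B^{J_i}(\underline{I};p^e)$. Interchanging the unions over $e$ and over $i$ then yields the set equality above, and the measurability of $\bigcup_i B^{J_i}(\underline{I})$ comes for free because the boxes defining each $B^{J_i}(\underline{I};p^e)$ are drawn from the countable family $\{[0,a_1/p^e]\times\cdots\times[0,a_t/p^e] : a\in\NN^t,\;e\in\NN\}$.

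The main obstacle is the intersection identity for a possibly infinite family: for finite families it is classical, following from the flatness of the Frobenius on a regular ring, but the infinite case requires more. The plan is to invoke Kunz's theorem: since $R$ is $F$-finite regular, $R^{1/p^e}$ is a finitely generated projective $R$-module, hence a direct summand of some free $R^n$. For a free module the identity $(\bigcap_i K_i)R^n=\bigcap_i (K_i R^n)$ is immediate componentwise, and writing $R^n=R^{1/p^e}\oplus N$ with $K R^n= K R^{1/p^e}\oplus K N$ for every ideal $K$ transfers it to $\bigcap_i (K_i R^{1/p^e}) = (\bigcap_i K_i) R^{1/p^e}$ by uniqueness of the direct-sum decomposition. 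Combined with the characterization $g\in K^{[p^e]}\iff g^{1/p^e}\in K\cdot R^{1/p^e}$, which follows from additivity of the Frobenius in characteristic $p$ and is meaningful because regular rings are reduced, this converts the intersection identity in $R^{1/p^e}$ into the desired equality of Frobenius powers.
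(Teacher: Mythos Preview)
Your proof is correct and follows the same route as the paper: establish the set equality $B^J(\underline{I})=\bigcup_i B^{J_i}(\underline{I})$ by showing $\V_{\underline{I}}^J(p^e)=\bigcup_i \V_{\underline{I}}^{J_i}(p^e)$ at every level, which in turn rests on the identity $\bigl(\bigcap_i J_i\bigr)^{[p^e]}=\bigcap_i J_i^{[p^e]}$. The paper simply asserts this identity, whereas you supply a careful justification valid even for infinite families via Kunz's theorem and the direct-summand argument for $F_*^e R$; this extra care is warranted and your argument for it is sound.
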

\begin{proof}
We show that $\bigcup_i\V_{\underline{I}}^{J_i}(p^e) =\V_{\underline{I}}^J(p^e)$ for every nonnegative integer $e$. We claim that  $\bigcup_i\V_{\underline{I}}^{J_i}(p^e) \subseteq \V_{\underline{I}}^J(p^e)$. Indeed, let $a \in \V_{\underline{I}}^{J_i}(p^e)$ for some $i$, then $\underline{I}^a \not \subseteq J_i^{[p^e]}$. Since $J \subseteq J_i$, we have that $\underline{I}^a \not \subseteq J^{[p^e]}$. Thus, $a \in \V_{\underline{I}}^J(p^e)$.
 
We now prove the other inclusion. Let $a \in \V_{\underline{I}}^J(p^e)$, then we have that $\underline{I}^a \not \subseteq J^{[p^e]}=\left( \bigcap_iJ_i \right) ^{[p^e]}=\bigcap_i J_i^{[p^e]}$. Consequently, there exists $i$ such that $\underline{I}^a \not \subseteq J_i^{[p^e]}$. Thus, $a \in \V_{\underline{I}}^{J_i}(p^e)$. 

It follows that $B^J(\underline{I};p^e)=\bigcup_iB^{J_i}(\underline{I};p^e)$, thus $B^J(\underline{I})=\bigcup_iB^{J_i}(\underline{I})$. Therefore, $\Vol(B^J(\underline{I}))=\Vol(\bigcup_iB^{J_i}(\underline{I}))$.
\end{proof}

\begin{remark} \label{remark3}
Suppose that $R$ is an $F$-pure ring. Let $\underline{I}=I_1,\ldots, I_t\subseteq R$ be a sequence, and $J\subseteq R$ be an ideal such that $I_1,\ldots, I_t \subseteq \sqrt{J}$. If we take $\alpha \in B^J(\underline{I};p^e)$, there exists $\beta \in \V_{\underline{I}}^J(p^e)$ such that each $\alpha_i \leq \frac{\beta_i}{p^e}$. Thus, $\lfloor p^e \alpha_i\rfloor \leq \beta_i$. Since $\underline{I}^\beta \not \subseteq J^{[p^e]}$, $\underline{I}^{\lfloor p^e \alpha\rfloor} \not \subseteq J^{[p^e]}$. Therefore, $\lfloor p^e \alpha\rfloor \in \V_{\underline{I}}^J(p^e)$. 
\end{remark}


Suppose that $R$ is an $F$-finite regular  ring.
Let $\underline{I}=I_1,\ldots, I_t\subseteq R$ be a sequence of ideals, 
and $I=I_1+\cdots + I_t$.
The mixed test  ideals satisfy the following equation
$$
\tau(I^\lambda)=\sum_{\alpha_1+\cdots + \alpha_t=\lambda} \tau(I^{\alpha_1}_1 \cdots  I^{\alpha_t}_t).
$$
Motivated by this result, we obtain the following similar properties for $F$-thresholds.
This plays an important role to characterize $F$-pure complete intersections in terms of $F$-volume.

\begin{proposition}\label{propsup}
Suppose that $R$ is an $F$-pure ring.
Let $\underline{I}=I_1,\ldots, I_t\subseteq R$ be a sequence, and $J\subseteq R$ be an ideal such that $I_1,\ldots, I_t \subseteq \sqrt{J}$.
Then, 
$$
c^J(I_1+\cdots + I_t)= \sup \{ |\theta| \; | \; \theta\in B^J(\underline{I}) \}.
$$
\end{proposition}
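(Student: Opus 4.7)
The plan is to combine the characterization of $\nu^J_{I_1+\cdots+I_t}(p^e)$ in terms of the sets $\V_{\underline{I}}^J(p^e)$ (already used implicitly in Corollary \ref{CorFactorial}) with a monotonicity argument that $F$-purity forces on the normalized sequence $\nu^J_I(p^e)/p^e$.

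First I would observe that if $I=I_1+\cdots+I_t$, then $I^n = \sum_{|a|=n} \underline{I}^a$, so $I^n \not\subseteq J^{[p^e]}$ if and only if $\underline{I}^a\not\subseteq J^{[p^e]}$ for some $a\in\NN^t$ with $|a|=n$. Consequently,
\[
\nu^J_I(p^e)=\max\{|a|\mid a\in \V_{\underline{I}}^J(p^e)\}.
\]
Next, for the inequality $\sup\{|\theta|\mid \theta\in B^J(\underline{I})\}\le c^J(I)$: any $\theta\in B^J(\underline{I};p^e)$ satisfies $\theta_i\le a_i/p^e$ for some $a\in \V_{\underline{I}}^J(p^e)$, so $|\theta|\le|a|/p^e\le \nu^J_I(p^e)/p^e$. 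Taking a sup over $\theta$ and then over $e$ bounds the left-hand side by $\sup_e \nu^J_I(p^e)/p^e$.

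For the reverse inequality, pick an $a^{(e)}\in \V_{\underline{I}}^J(p^e)$ achieving $|a^{(e)}|=\nu^J_I(p^e)$. Then the point $\theta^{(e)}=(a^{(e)}_1/p^e,\ldots,a^{(e)}_t/p^e)$ lies in $B^J(\underline{I};p^e)\subseteq B^J(\underline{I})$ and satisfies $|\theta^{(e)}|=\nu^J_I(p^e)/p^e$. Hence
\[
\sup\{|\theta|\mid \theta\in B^J(\underline{I})\}=\sup_{e\in\NN}\frac{\nu^J_I(p^e)}{p^e}.
\]

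Finally, I would use $F$-purity to show $\nu^J_I(p^e)/p^e$ is non-decreasing in $e$, which forces the supremum to coincide with the limit, i.e.\ with $c^J(I)$. Indeed, if $I^n\not\subseteq J^{[p^e]}$, then by $F$-purity $(I^n)^{[p]}\not\subseteq (J^{[p^e]})^{[p]}=J^{[p^{e+1}]}$, and since $I^{np}\supseteq (I^n)^{[p]}$ we get $I^{np}\not\subseteq J^{[p^{e+1}]}$. Therefore $\nu^J_I(p^{e+1})\ge p\,\nu^J_I(p^e)$, which gives monotonicity of the ratio. Hence the supremum equals the limit, which is precisely $c^J(I_1+\cdots+I_t)$. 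The main delicate point is the $F$-purity step, but it is a direct use of the definition of $F$-purity applied to the inclusion $I^n\not\subseteq J^{[p^e]}$; everything else is bookkeeping with the box $B^J(\underline{I};p^e)$ and the already-established identity $\nu^J_I(p^e)=\max\{|a|:a\in\V_{\underline{I}}^J(p^e)\}$.
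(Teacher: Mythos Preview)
Your argument is correct and follows essentially the same strategy as the paper: both identify $\nu^J_I(p^e)=\max\{|a|:a\in\V_{\underline{I}}^J(p^e)\}$ and use $F$-purity for a monotonicity step. The only organizational difference is that you first prove $\sup\{|\theta|:\theta\in B^J(\underline{I})\}=\sup_e \nu^J_I(p^e)/p^e$ and then invoke $F$-purity to show the sequence $\nu^J_I(p^e)/p^e$ is non-decreasing (so the sup equals the limit), whereas the paper proves the two inequalities $c^J(I)\le\lambda$ and $\lambda\le c^J(I)$ separately and uses $F$-purity via Proposition~\ref{crece} (the chain $B^J(\underline{I};p^e)\subseteq B^J(\underline{I};p^{e+1})$) together with Remark~\ref{remark3}; these are equivalent packagings of the same idea.
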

\begin{proof}
Let $\lambda= \sup \{ |\theta| \; | \; \theta \in B^J(\underline{I}) \}$
and $I=I_1+\cdots + I_t$.

Since $I^{\nu^J_I(p^e)}\not\subseteq J^{[p^e]}$, there exists $\alpha=(\alpha_1,\ldots,\alpha_t)\in \NN^t$ such that
$I^{\alpha_1}_1\cdots I^{\alpha_t}_t \not	\subseteq J^{[p^e]}$ and $|\alpha|=\nu^J_I(p^e)$.
Then, $\frac{1}{p^e}\alpha\in B^J(\underline{I})$. We conclude that $\frac{\nu^J_I(p^e)}{p^e} \leq \lambda$ for every $e.$
Then, $c^J(I)\leq 	\lambda.$

We now show the other inequality.
Let $\alpha=(\alpha_1,\ldots,\alpha_t) \in B^J(\underline{I})$. Then, $\alpha=(\alpha_1,\ldots,\alpha_t)\in  B^J(\underline{I};p^e)$ for $e\gg 0$. Then,  $(\lfloor p^e \alpha_1\rfloor,\ldots,\lfloor p^e \alpha_t\rfloor )\in \V_{\underline{I}}^J(p^e)$ for $e\gg 0$ by Remark \ref{remark3}.
We conclude that $I^{\lfloor p^e \alpha_1\rfloor}_1\cdots I^{\lfloor p^e \alpha_t\rfloor}_t\not\subseteq J^{[p^e]}$
Then, $I^{ \lfloor p^e \alpha_1\rfloor+\cdots+\lfloor p^e \alpha_t\rfloor }\not\subseteq J^{[p^e]}$.
Thus, $\lfloor p^e \alpha_1\rfloor+\cdots+\lfloor p^e \alpha_t\rfloor \leq   \nu^J_I(p^e)$ for $e\gg 0$.
We have that
$$
|\alpha|=\lim\limits_{e\to\infty }\frac{\lfloor p^e \alpha_1\rfloor+\cdots+\lfloor p^e \alpha_t\rfloor}{p^e} \leq  
\lim\limits_{e\to\infty }\frac{ \nu^J_I(p^e)}{p^e}=c^J(I).
$$ 
We conclude that $\lambda\leq c^J(I).$
\end{proof}



The following result allows us to obtain the $F$-threshold under special circumstances.

\begin{proposition} \label{srftp}
Suppose that $R$ is an $F$-pure ring.
Let $I,J\subseteq R$ be two ideals such that $I \subseteq J$. 
Let $\underline{f}=f_1,\ldots, f_t$ be minimal generators of $I$. If $\Vol^J_F(\underline{f})=1$, then $c^J(I)=t$. 
\end{proposition}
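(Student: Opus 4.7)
The plan is to combine Proposition \ref{propsup} (which identifies $c^J(I)$ with the supremum of $|\theta|$ over $\theta \in B^J(\underline{f})$) with a rigidity statement forcing $B^J(\underline{f}) = [0,1)^t$ under the hypothesis $\Vol_F^J(\underline{f}) = 1$. The condition $I \subseteq J$ will supply the one-sided containment, and the assumption on the $F$-volume, read through Proposition \ref{PropositionMeasureB}, will upgrade that containment to an equality of measurable sets.

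First I would record the upper bound. Since $I \subseteq J$, each generator $f_i$ lies in $J$, so $f_i^{p^e} \in J^{[p^e]}$; consequently, if $a=(a_1,\ldots,a_t) \in \V_{\underline{f}}^J(p^e)$ then $a_i \leq p^e - 1$ for every $i$, and therefore
\[
B^J(\underline{f};p^e) \subseteq [0, 1-1/p^e]^t \subseteq [0,1)^t.
\]
Taking the union over $e$, we get $B^J(\underline{f}) \subseteq [0,1)^t$. In particular, $\sup\{|\theta| : \theta \in B^J(\underline{f})\} \leq t$, so by Proposition \ref{propsup} we already obtain $c^J(I) \leq t$.

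Next I would prove the matching lower bound using the volume hypothesis. By Proposition \ref{PropositionMeasureB}, $\Vol(B^J(\underline{f})) = \Vol_F^J(\underline{f}) = 1 = \Vol([0,1)^t)$, so the inclusion $B^J(\underline{f}) \subseteq [0,1)^t$ is an equality modulo a set of measure zero. Moreover, $B^J(\underline{f})$ is downward-closed in $\RR_{\geq 0}^t$, since it is a union of boxes $[0, a_1/p^e]\times\cdots\times[0, a_t/p^e]$ each of which is downward-closed. Downward-closedness together with the measure equality actually forces $B^J(\underline{f}) = [0,1)^t$: if some $\theta^* \in [0,1)^t$ were missing from $B^J(\underline{f})$, then by the contrapositive of downward-closedness the whole orthant $[\theta^*_1,1)\times\cdots\times[\theta^*_t,1)$ would lie in $[0,1)^t \setminus B^J(\underline{f})$, which has positive measure -- a contradiction. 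Applying Proposition \ref{propsup} once more,
\[
c^J(I) = \sup\{|\theta| : \theta \in B^J(\underline{f})\} = \sup\{|\theta| : \theta \in [0,1)^t\} = t,
\]
which finishes the argument.

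The only subtle point is the downward-closed rigidity step; if one is squeamish about the pointwise conclusion one may argue instead purely in measure, noting that for every $\delta>0$ the cube $(1-\delta,1)^t$ has measure $\delta^t$, so it must intersect $B^J(\underline{f})$, producing $\theta \in B^J(\underline{f})$ with $|\theta| > t - t\delta$. Either route yields $c^J(I) \geq t$, and combined with the upper bound above gives $c^J(I) = t$. Minimality of the generators plays no role in this direction; it is only relevant for the converse implication in Theorem \ref{ThmFpureCI}.
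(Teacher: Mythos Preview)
Your proof is correct and follows essentially the same approach as the paper: both arguments show $B^J(\underline{f})=[0,1)^t$ by combining the containment $B^J(\underline{f})\subseteq[0,1)^t$ (from $I\subseteq J$) with the observation that a missing point $\theta^*$ would, by downward-closedness, exclude the whole box $[\theta^*_1,1)\times\cdots\times[\theta^*_t,1)$ and thus contradict $\Vol(B^J(\underline{f}))=1$, and then invoke Proposition~\ref{propsup}. You are simply more explicit than the paper about the containment step and about why downward-closedness holds; your remark that minimality of the generators is not used here is also accurate.
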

\begin{proof}
We show that $B^J(\underline{f})=[0,1)^t$. It is enough to prove that $[0,1)^t \subseteq    B^J(\underline{f})$. We proceed by contradiction. We suppose that there exists $a \in [0,1)^t$ such that $a \not \in B^J(\underline{f})$. Thus, $H \cap B^J(\underline{f})=\emptyset$, where $H$ denotes the set $[a_1,1]\times \cdots \times [a_t,1]$. Hence, $B^J(\underline{f}) \subseteq [0,1]^t-H$. It follows that $\Vol^J_F(\underline{f})<1$, and we get a contradiction.

In addition, from Proposition \ref{propsup}, we have that 
\begin{align*}
c^J(I)&= \sup \{ |\theta| \; | \; \theta\in B^J(\underline{f}) \} \\ &=\sup \{ |\theta| \; | \; \theta\in [0,1)^t \}\\ &=t.
\end{align*}

\end{proof}

We now characterize $F$-pure complete intersections in terms of $F$-volumes. 
This is along the same lines of how the $F$-pure threshold of a hypersurface characterizes, via Fedder's Criterion \cite{Fedder'scriterion}, when this variety if $F$-pure.

\begin{theorem}\label{ThmFpureCI}
Suppose that $(R,\m,K)$ is a local regular ring.
Let $I \subseteq \mathfrak{m}$ be an ideal in $R$, and $\underline{f}=f_1,\ldots, f_t$ be minimal generators of $I$. Then, $\Vol^{\mathfrak{m}}_F (\underline{f})=1$ if and only if $I$ is an $F$-pure complete intersection. 
\end{theorem}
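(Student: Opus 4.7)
The plan is to treat the two implications separately. For the forward direction (that $I$ being an $F$-pure complete intersection implies $\Vol_F^\m(\underline{f})=1$), I would use that if $R/I$ is $F$-pure it is automatically $F^e$-pure for every $e \geq 1$, and the colon formula $(I^{[p^e]}:I) = I^{[p^e]} + ((f_1\cdots f_t)^{p^e-1})$ valid for any regular sequence, combined with Fedder's criterion applied at level $e$, forces $(f_1\cdots f_t)^{p^e-1} \notin \m^{[p^e]}$ for every $e$. For any $(a_1,\ldots,a_t)\in\{0,\ldots,p^e-1\}^t$, the factorization $(f_1\cdots f_t)^{p^e-1} = f_1^{a_1}\cdots f_t^{a_t}\cdot f_1^{p^e-1-a_1}\cdots f_t^{p^e-1-a_t}$ then forces $f_1^{a_1}\cdots f_t^{a_t}\notin \m^{[p^e]}$. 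Combined with the trivial inclusion $\V_{\underline{f}}^\m(p^e)\subseteq \{0,\ldots,p^e-1\}^t$ coming from $f_i\in\m$ (so that $f_i^{p^e}\in\m^{[p^e]}$), this gives $|\V_{\underline{f}}^\m(p^e)| = p^{et}$, hence $\Vol_F^\m(\underline{f})=1$.

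For the converse, assuming $\Vol_F^\m(\underline{f}) = 1$, the first step is to identify the region $B^\m(\underline{f})$ explicitly. Proposition \ref{PropositionMeasureB} (applicable since $R$ is regular, hence $F$-pure) yields $\Vol(B^\m(\underline{f})) = 1$, and the downward-closure of $B^\m(\underline{f})$ together with its containment in $[0,1)^t$ forces $B^\m(\underline{f})=[0,1)^t$ exactly, by the same measure-theoretic argument as in the proof of Proposition \ref{srftp} (any excluded point would cut off an upper box of positive measure). Next, for each $k\geq 1$, the point $((p^k-1)/p^k,\ldots,(p^k-1)/p^k)\in[0,1)^t=B^\m(\underline{f})$ lies in $B^\m(\underline{f};p^e)$ for some $e\geq k$ by the monotonicity of Proposition \ref{crece}, and Remark \ref{remark3} then places $(p^e-p^{e-k},\ldots,p^e-p^{e-k})\in\V_{\underline{f}}^\m(p^e)$, i.e.\ $((f_1\cdots f_t)^{p^k-1})^{p^{e-k}}\notin \m^{[p^e]}$. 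The contrapositive of the standard Frobenius fact that $r\in\m^{[p^k]}$ implies $r^{p^{e-k}}\in(\m^{[p^k]})^{[p^{e-k}]} = \m^{[p^e]}$ then yields the key finite-level inequality $(f_1\cdots f_t)^{p^k-1}\notin\m^{[p^k]}$ for every $k\geq 1$. Specializing to $k=1$ and using that $(f_1\cdots f_t)^{p-1}\in(I^{[p]}:I)$ (because $f_i\cdot(f_1\cdots f_t)^{p-1}=f_i^p(f_1\cdots\widehat{f_i}\cdots f_t)^{p-1}\in I^{[p]}$), Fedder's criterion then gives that $R/I$ is $F$-pure.

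For the complete intersection property, I would apply Proposition \ref{srftp} to obtain $c^\m(I) = t$ and combine it with the standard bound $c^\m(I)\leq\operatorname{ht}(I)$ in regular local rings (coming from the relationship between $F$-thresholds and test ideals, or from Skoda-type results in the spirit of \cite{HMTW,BMS-MMJ}) together with Krull's inequality $\operatorname{ht}(I)\leq\mu(I)=t$, which forces $\operatorname{ht}(I)=t$ and hence that $\underline{f}$ is a regular sequence. The main obstacle lies in the converse direction, specifically in the bridge from the asymptotic volume condition $\Vol_F^\m(\underline{f})=1$ to the concrete finite-level Fedder inequality $(f_1\cdots f_t)^{p^k-1}\notin\m^{[p^k]}$; this relies crucially on the exact identification $B^\m(\underline{f})=[0,1)^t$ and on the monotonicity of $\{B^\m(\underline{f};p^e)\}_e$ afforded by the $F$-purity of $R$.
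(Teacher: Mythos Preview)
Your argument is correct and runs parallel to the paper's proof: both directions hinge on Fedder's criterion, and for the converse you deduce the complete intersection property from Proposition~\ref{srftp} together with the inequality $c^\m(I)\leq\operatorname{ht}(I)$ exactly as the paper does. The one substantive difference is that the paper, after obtaining that $\underline{f}$ is a regular sequence, simply writes ``Therefore, $I$ is an $F$-pure complete intersection'' without justifying the $F$-purity, whereas you supply this step explicitly by extracting $(f_1\cdots f_t)^{p-1}\notin\m^{[p]}$ from the identification $B^\m(\underline{f})=[0,1)^t$ via Remark~\ref{remark3} and flatness of Frobenius, and then invoking Fedder; this genuinely completes the argument. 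Your forward direction is also marginally more direct, computing $|\V^\m_{\underline{f}}(p^e)|=p^{et}$ exactly rather than passing through the region $B^\m(\underline{f})$, though the underlying idea---that Fedder forces $(p^e-1,\ldots,p^e-1)\in\V^\m_{\underline{f}}(p^e)$---is the same.
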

\begin{proof}
We suppose that $\Vol^{\mathfrak{m}}_F (\underline{f})=1$. Then $c^{\mathfrak{m}}(I)=t=\mu(I)$ by Proposition \ref{srftp}. Thus, $\mu(I)=c^{\m}(I)\leq\mathrm{ht}(I)\leq\mu(I)$. We conclude that $\mathrm{ht}(I)=\mu(I)$. Hence, $f_1,\ldots, f_t$ is a regular sequence in $R$. Therefore, $I$ is an $F$-pure complete intersection.

For the other direction, we show that $[0,1)^t = B^\mathfrak{m}(\underline{f})$. Let $a \in [0,1)^t$. Then, $\max\{a_i\} \leq \frac{p^e-1}{p^e}$ for some $e\in \mathbb{N}$. Since $I$ is an $F$-pure ideal, $R/I$ is an $F$-pure ring. Since  $f_1,\ldots, f_t$ is a regular sequence in $R$, we have that $\underline{f}^{p^e-1}\not \in \mathfrak{m}^{[p^e]}$ by Fedder's criterion \cite[Proposition $2.1$]{Fedder'scriterion}. Then, $(p^e-1,\ldots,p^e-1)\in \V_{\underline{f}}^J(p^e)$. We conclude that  $a\in B^{\mathfrak{m}}(\underline{f};p^e) \subseteq B^{\mathfrak{m}}(\underline{f}) $.
Therefore,  $\Vol^{\mathfrak{m}}_F (\underline{f})=1$ by Proposition \ref{PropositionMeasureB}.    
\end{proof}

\section{Relations with Hilbert-Kunz multiplicities}\label{SecHKVol}

In this section we relate the $F$-volume with Hilbert-Kunz multiplicities.
This is related to previous work done for $F$-thresholds and these multiplicities \cite{NBS}.
We start proving Theorem \ref{MainHK}.

\begin{theorem}\label{ThmHK}
Suppose that $(R,\m,K)$ is a local ring. Let $\underline{f}=f_1,\ldots , f_t$ be part of a system of parameters for $R$, $I=(\underline{f})$, and $\overline{R}=R/I$. Then, 
\[
\e_{HK}(J;R)   \leq   \e_{HK}(J\overline{R}; \overline{R}) \Vol_F^J(\underline{f})
\]
for any $\m$-primary ideal $J$, such that $I\subseteq J$.
\end{theorem}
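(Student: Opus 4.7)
The plan is to derive the inequality from a single estimate on lengths at finite level, namely
\[
\ell\bigl(R/J^{[p^e]}\bigr)\;\leq\;|\V^{J}_{\underline{f}}(p^{e})|\cdot\ell\bigl(\overline{R}/J^{[p^e]}\overline{R}\bigr)
\]
for every $e\in\NN$, and then divide by $p^{ed}$ (where $d=\dim R$) and let $e\to\infty$. Since $\underline{f}$ is part of a system of parameters, $\dim\overline{R}=d-t$, so the three factors in the resulting inequality converge to $\e_{HK}(J;R)$, $\Vol^{J}_{F}(\underline{f})$ and $\e_{HK}(J\overline{R};\overline{R})$ respectively, giving the theorem.

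To prove the length inequality I would build a filtration of $R/J^{[p^e]}$ indexed by $\V^{J}_{\underline{f}}(p^{e})$ whose successive quotients are cyclic modules over $\overline{R}/J^{[p^e]}\overline{R}$. First, list
\[
\V^{J}_{\underline{f}}(p^{e})=\{a^{(1)},\ldots,a^{(N)}\}
\]
using any linear extension of the componentwise order on $\NN^{t}$, with $a^{(1)}=0$. Then set
\[
W_{k}=\sum_{j\geq k}\underline{f}^{\,a^{(j)}}R+J^{[p^e]},\qquad k=1,\ldots,N,\qquad W_{N+1}=J^{[p^e]},
\]
giving a descending chain $R=W_{1}\supseteq W_{2}\supseteq\cdots\supseteq W_{N+1}=J^{[p^e]}$.

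The key step is to check that each quotient $W_{k}/W_{k+1}$ is cyclic over $\overline{R}/J^{[p^e]}\overline{R}$. Clearly it is cyclic over $R$, generated by the class of $\underline{f}^{\,a^{(k)}}$, and $J^{[p^e]}$ annihilates this class. For each $i\in\{1,\ldots,t\}$, the element $f_{i}\underline{f}^{\,a^{(k)}}=\underline{f}^{\,a^{(k)}+e_{i}}$ lies in $W_{k+1}$: either $a^{(k)}+e_{i}\in\V^{J}_{\underline{f}}(p^{e})$, in which case it equals some $a^{(j)}$ with $j>k$ (since the ordering refines componentwise inequality), or $a^{(k)}+e_{i}\notin\V^{J}_{\underline{f}}(p^{e})$, in which case $\underline{f}^{\,a^{(k)}+e_{i}}\in J^{[p^e]}\subseteq W_{k+1}$. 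Thus $I+J^{[p^e]}$ annihilates $W_{k}/W_{k+1}$, making it a cyclic $\overline{R}/J^{[p^e]}\overline{R}$-module and bounding $\ell(W_{k}/W_{k+1})\leq\ell(\overline{R}/J^{[p^e]}\overline{R})$.

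Summing over $k$ gives the claimed length inequality, and the passage to the limit concludes the proof. The only delicate point is the choice of ordering on $\V^{J}_{\underline{f}}(p^{e})$; once one insists on a linear extension of the componentwise partial order, the check that $f_{i}\underline{f}^{\,a^{(k)}}\in W_{k+1}$ is automatic, and every other step is formal.
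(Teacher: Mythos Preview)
Your proof is correct and follows essentially the same approach as the paper: both build a filtration of length $|\V^{J}_{\underline{f}}(p^{e})|$ whose successive quotients are cyclic $R/(I+J^{[p^e]})$-modules, obtain the finite-level length inequality, and pass to the limit. The only cosmetic difference is that the paper filters $R/\cI_e$ (where $\cI_e$ is generated by the monomials $\underline{f}^{a}$ with $a\notin\V^{J}_{\underline{f}}(p^{e})$) and then tensors with $R/J^{[p^e]}$, whereas you filter $R/J^{[p^e]}$ directly; your explicit use of a linear extension of the componentwise order makes the verification that $f_i\underline{f}^{\,a^{(k)}}\in W_{k+1}$ a bit more transparent than the paper's one-line assertion of the filtration.
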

\begin{proof}
Let $I=(\underline{f})$ and 
 $\cI_e=(f^{a_1}_1 \cdots f^{a_t}_t\;|\;a\not\in \V_{\underline{f}}^J(p^e) )R$. 
 Then,  $R/\cI_e$ has a filtration $0=N_0\subseteq N_1\subseteq \cdots \subseteq N_m=R/\cI_e$ where $N_{t+1}/N_t$ is a homomorphic image of $R/I$ and $m=| \V_{\underline{f}}^J(p^e)|$.
 Since $J^{[p^e]}$ is $\m$-primary, we have that
 $$
 \lambda(N_{t+1}\otimes_R R/J^{[p^e]})\leq \lambda(N_{t}\otimes_R R/J^{[p^e]})+\lambda(N_{t+1}/N_t\otimes_R R/J^{[p^e]}).
 $$ 
 As a consequence, 
$$
 \lambda(R/\cI_e\otimes_R R/J^{[p^e]})\leq | \V_{\underline{f}}^J(p^e)| \lambda(R/I\otimes_R R/J^{[p^e]}).
$$
 By the definition of $\cI_e$, we have that $\cI_e\subseteq J^{[p^e]}$.
 Then, 
 \begin{align*}
 \lambda (R/J^{[p^e]})&=\lambda (R/\cI_e +J^{[p^e]})\\
 &= \lambda(R/\cI_e\otimes_R R/J^{[p^e]})\\
 &\leq | \V_{\underline{f}}^J(p^e)| \lambda(R/I\otimes_R R/J^{[p^e]})\\
  &\leq | \V_{\underline{f}}^J(p^e)| \lambda(R/(I+J^{[p^e]})).
 \end{align*}
 After dividing by $p^{ed}$, where $d=\dim(R)$, we obtain that 
 $$
  \frac{\lambda (R/J^{[p^e]})}{p^{ed}} \leq \frac{ | \V_{\underline{f}}^J(p^e)| \lambda(R/(I+J^{[p^e]}))}{p^{ed}}
  =\frac{ | \V_{\underline{f}}^J(p^e)|}{p^{et}}\cdot
  \frac{\lambda(R/(I+J^{[p^e]}))}{p^{e(d-t)}}
  =\frac{ | \V_{\underline{f}}^J(p^e)|}{p^{et}} \cdot \frac{\lambda(\overline{R}/J^{[p^e]})}{p^{e(d-t)}}.
 $$
 After taking the limit as $e\to \infty$, we obtain the desired inequality.
\end{proof}

We now recall a conjecture that relates the Hilbert-Kunz multiplicity and $F$-thresholds.

\begin{conjecture}[{\cite{NBS}}]\label{ConjNBS}
Let $(R,\m,K)$ be a local ring. Let $I\subseteq R$ be an ideal  
generated by a part of a system of parameters $(f_1,\ldots, f_\ell).$ Let $\overline{R}=R/I.$
Let $J$ be an $m$-primary ideal.
Then,
$$
\ehk(J)\leq \ehk(J\overline{R}) \frac{(c^J(I))^{\ell}}{\ell^\ell}.
$$
\end{conjecture}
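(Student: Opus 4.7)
The plan is to produce an upper bound on $\lambda(R/J^{[p^e]})$ that factors, after dividing by $p^{ed}$ with $d=\dim R$, into a piece converging to $\Vol_F^J(\underline{f})$ and a piece converging to $\e_{HK}(JS;S)$. Since $\underline{f}$ is part of a system of parameters, $\dim S = d-t$, so the target inequality has the shape
$$\frac{\lambda(R/J^{[p^e]})}{p^{ed}}\;\leq\;\frac{|\V_{\underline{f}}^J(p^e)|}{p^{et}}\cdot\frac{\lambda(S/J^{[p^e]}S)}{p^{e(d-t)}},$$
and sending $e\to\infty$ will finish the job.

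To set this up I would introduce the auxiliary ideal
$$\cI_e \;=\; (f_1^{a_1}\cdots f_t^{a_t} \,:\, (a_1,\ldots,a_t)\notin \V_{\underline{f}}^J(p^e))\,R,$$
which by construction sits inside $J^{[p^e]}$, so that $R/\cI_e\otimes_R R/J^{[p^e]}=R/J^{[p^e]}$. The idea is then to filter $R/\cI_e$ by $R$-submodules $0=M_0\subseteq M_1\subseteq\cdots\subseteq M_m = R/\cI_e$ with $m=|\V_{\underline{f}}^J(p^e)|$, by enumerating the elements $a^{(1)},\ldots,a^{(m)}$ of $\V_{\underline{f}}^J(p^e)$ in order of decreasing total degree and setting $M_j = (\cI_e+(f^{a^{(1)}},\ldots,f^{a^{(j)}}))/\cI_e$. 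The ordering is chosen precisely so that for each $k$, the element $f_k\cdot f^{a^{(j)}}=f^{a^{(j)}+e_k}$ either belongs to $\cI_e$ (when $a^{(j)}+e_k\notin \V_{\underline{f}}^J(p^e)$) or equals some $f^{a^{(l)}}$ with $l<j$; in either case it already lies in $M_{j-1}$. Hence every successive quotient $M_j/M_{j-1}$ is cyclic and annihilated by $I=(\underline{f})$, so it is a homomorphic image of $S=R/I$. The filtration exhausts $R/\cI_e$ because the zero exponent belongs to $\V_{\underline{f}}^J(p^e)$ (since $J\neq R$) and, under the chosen ordering, it is the last index, so the unit lies in $M_m$.

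Tensoring the filtration with the finite-length ring $R/J^{[p^e]}$ and applying subadditivity of length to the right-exact sequences $M_{j-1}\otimes\cdot\to M_j\otimes\cdot\to (M_j/M_{j-1})\otimes\cdot\to 0$ would then yield
$$\lambda(R/J^{[p^e]})\;=\;\lambda(R/\cI_e\otimes_R R/J^{[p^e]})\;\leq\;|\V_{\underline{f}}^J(p^e)|\cdot\lambda(S/J^{[p^e]}S),$$
after which dividing by $p^{ed}$ and letting $e\to\infty$ produces the stated inequality.

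The main obstacle I anticipate is the verification that every successive quotient is genuinely killed by all of $I$, since this is the only place where the combinatorial choice of ordering interacts with the algebraic structure of $\cI_e$. It is exactly this step that converts the crude per-step estimate (which would only recover the trivial $\lambda(R/J^{[p^e]})\leq|\V_{\underline{f}}^J(p^e)|\cdot\lambda(R/J^{[p^e]})$) into the sharp per-step bound by $\lambda(S/J^{[p^e]}S)$ of one lower growth order, which is what makes the final limit come out to the desired product.
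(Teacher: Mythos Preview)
Your argument is a correct proof of a \emph{different} statement, namely Theorem~\ref{ThmHK}: $\e_{HK}(J;R)\le \e_{HK}(J\overline{R};\overline{R})\,\Vol_F^J(\underline{f})$. Indeed, the filtration you build and the length estimate you extract are essentially the paper's own proof of that theorem. But the statement you were asked to address is Conjecture~\ref{ConjNBS}, whose right-hand side carries the factor $(c^J(I))^{\ell}/\ell^{\ell}$, not $\Vol_F^J(\underline{f})$. The paper does not prove this conjecture; it records it as an open problem from \cite{NBS} and observes that Theorem~\ref{ThmHK} refines the known partial results toward it.

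The gap between what you prove and what the Conjecture asserts is precisely the missing inequality $\Vol_F^J(\underline{f})\le (c^J(I))^{\ell}/\ell^{\ell}$. The paper only establishes the weaker bound $\Vol_F^J(\underline{f})\le (c^J(I))^{\ell}/\ell!$ (Corollary~\ref{CorFactorial}), and since $\ell!<\ell^{\ell}$ for $\ell\ge 2$ this does not suffice. So your filtration argument, while sound, stops one genuine step short of the Conjecture, and that step is exactly the open content.
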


\begin{remark}\label{RemComp}
We recall two inequalities  \cite{NBS} related to Conjecture \ref{ConjNBS} that were previously obtained:
\begin{equation}\label{EqNBS1}
\ehk(J)\leq \ehk(J\overline{R}) \frac{(c^J(I))^{\ell}}{\ell !}.
\end{equation}
and 
\begin{equation}\label{EqNBS2}
\ehk(J)\leq \ehk(J\overline{R}) c^J(f_1) \cdots c^J(f_\ell). 
\end{equation}
By Corollary \ref{CorFactorial}, we have that
$$
 \Vol^J_F(\underline{f})\leq \frac{(c^J(I))^{\ell}}{\ell !}
$$
By Proposition \ref{PropBasicProp}(3), we have that
$$\Vol^J_F(\underline{f}) \leq  c^J(f_1) \cdots c^J(f_\ell).$$ 
Therefore, Theorem \ref{ThmHK} is a refinement of Inequalities \ref{EqNBS1} and \ref{EqNBS2}.
\end{remark}

\section*{Acknowledgments} 
We thank Felipe P\'erez for inspiring conversations. We also  thank Alessandro De Stefani, Daniel J. Hern\'andez, and  Jack Jeffries for helpful comments and suggestions.
We thank the referee for useful comments.
Part of this work was done while the third-named author was  attending a summer undergraduate mathematics research experience organized by the Mexican Academy of Sciences.

\def\cprime{$'$} \def\cprime{$'$} \def\cprime{$'$}
\begin{thebibliography}{DSNnBP18}

\bibitem[BMS08]{BMS-MMJ}
Manuel Blickle, Mircea Musta{\c{t}}{\v{a}}, and Karen~E. Smith.
\newblock Discreteness and rationality of {$F$}-thresholds.
\newblock {\em Michigan Math. J.}, 57:43--61, 2008.
\newblock Special volume in honor of Melvin Hochster.

\bibitem[DSNB18]{DSNB}
Alessandro De~Stefani and Luis N{\'u}{\~n}ez-Betancourt.
\newblock {$F$}-thresholds of graded rings.
\newblock {\em Nagoya Math. J.}, 229:141--168, 2018.

\bibitem[DSNnBP18]{DSNBP}
Alessandro De~Stefani, Luis N\'{u}\~{n}ez Betancourt, and Felipe P\'{e}rez.
\newblock On the existence of {$F$}-thresholds and related limits.
\newblock {\em Trans. Amer. Math. Soc.}, 370(9):6629--6650, 2018.

\bibitem[Fed83]{Fedder'scriterion}
Richard Fedder.
\newblock {$F$}-purity and rational singularity.
\newblock {\em Trans. Amer. Math. Soc.}, 278(2):461--480, 1983.

\bibitem[HJ18]{DJpfamilies}
Daniel~J. Hern\'{a}ndez and Jack Jeffries.
\newblock Local {O}kounkov bodies and limits in prime characteristic.
\newblock {\em Math. Ann.}, 372(1-2):139--178, 2018.

\bibitem[HL02]{HLMCM}
Craig Huneke and Graham~J. Leuschke.
\newblock Two theorems about maximal {C}ohen-{M}acaulay modules.
\newblock {\em Math. Ann.}, 324(2):391--404, 2002.

\bibitem[HMTW08]{HMTW}
Craig Huneke, Mircea Musta{\c{t}}{\u{a}}, Shunsuke Takagi, and Kei-ichi
  Watanabe.
\newblock F-thresholds, tight closure, integral closure, and multiplicity
  bounds.
\newblock {\em Michigan Math. J.}, 57:463--483, 2008.
\newblock Special volume in honor of Melvin Hochster.

\bibitem[HY03]{HY2003}
Nobuo Hara and Ken-Ichi Yoshida.
\newblock A generalization of tight closure and multiplier ideals.
\newblock {\em Trans. Amer. Math. Soc.}, 355(8):3143--3174 (electronic), 2003.

\bibitem[MOY10]{MOY}
Kazunori Matsuda, Masahiro Ohtani, and Ken-ichi Yoshida.
\newblock Diagonal {$F$}-thresholds on binomial hypersurfaces.
\newblock {\em Comm. Algebra}, 38(8):2992--3013, 2010.

\bibitem[MTW05]{MTW}
Mircea Musta{\c{t}}{\v{a}}, Shunsuke Takagi, and Kei-ichi Watanabe.
\newblock F-thresholds and {B}ernstein-{S}ato polynomials.
\newblock pages 341--364, 2005.

\bibitem[NnBS20]{NBS}
Luis N\'{u}\~{n}ez Betancourt and Ilya Smirnov.
\newblock Hilbert--{K}unz multiplicities and {$F$}-thresholds.
\newblock {\em Bol. Soc. Mat. Mex. (3)}, 26(1):15--25, 2020.

\bibitem[P{\'e}r13]{FelipeConstReg}
Felipe P{\'e}rez.
\newblock On the constancy regions for mixed test ideals.
\newblock {\em J. Algebra}, 396:82--97, 2013.

\bibitem[SVdB97]{SmithVDB}
Karen~E. Smith and Michel Van~den Bergh.
\newblock Simplicity of rings of differential operators in prime
  characteristic.
\newblock {\em Proc. London Math. Soc. (3)}, 75(1):32--62, 1997.

\bibitem[Tuc12]{TuckerFSig}
Kevin Tucker.
\newblock {$F$}-signature exists.
\newblock {\em Invent. Math.}, 190(3):743--765, 2012.

\bibitem[Yao06]{YaoObs}
Yongwei Yao.
\newblock Observations on the {$F$}-signature of local rings of characteristic
  {$p$}.
\newblock {\em J. Algebra}, 299(1):198--218, 2006.

\end{thebibliography}

\end{document}